\setlist[enumerate]{label=\roman*)}
\def\ps@pprintTitle{%
     \let\@oddhead\@empty
     \let\@evenhead\@empty
     \def\@oddfoot{\footnotesize\itshape
       Preprint \ifx\@journal\@empty
       \else\@journal\fi\hfill\today}%
     \let\@evenfoot\@oddfoot}
\newcommand{\spec}[1]{ \sigma(#1) }
\newcommand{\dom}[1]{ D(#1) }
\newcommand{\opA}{A}
\newcommand{\opB}{B}
\newcommand{\opAt}{\widetilde{A}}
\newcommand{\opBt}{\widetilde{B}}
\newcommand{\opSt}{\widetilde{S}}
\newcommand{\ut}{\widetilde{u}}
\newcommand{\Ut}{\widetilde{U}}
\newcommand{\uzt}{\widetilde{u}^0}
\newcommand{\range}{\mathrm{Im} \, }
\newcommand{\ps}[3]{ {\left\langle #1 , #2 \right\rangle}_{#3} }
\newcommand{\field}[1]{\ensuremath{\mathbb{#1}}}
\newcommand{\C}{\field{C}\xspace}
\newcommand{\R}{\field{R}\xspace}
\newcommand{\Z}{\field{Z}\xspace}
\newcommand{\N}{\field{N}\xspace}
\newcommand{\lin}[1]{\mathcal{L}(#1)}
\newcommand{\Span}{{\rm Span}}
\newcommand\Id{\mathrm{Id}}
\newcommand\ddt{\frac{d}{dt}}
\newcommand\ext[1]{\overline{#1}}
\newcommand\Tau{\mathcal{T}}
\newcommand\cinfc{{C}^{\infty}_{c}}
\newcommand\g{g}
\newcommand\G{G}
\newcommand\K{K}
\newcommand\h{h}
\renewcommand\k{k}
\renewcommand\H{H}
\def\ds{\displaystyle}
\def\norm#1{\left\|#1\right\|}
\newcommand{\abs}[1]{ \left|#1\right| }
\newcommand{\ens}[1]{ \left\{#1\right\} }
\newcommand\tih{\widetilde{k}}
\newcommand\tiU{\widetilde{U}}
\newcommand\bmu{(\opB^*)^{-1}}
\newtheorem{theorem}{Theorem}[section]
 \newtheorem{lemma}{Lemma}[section]
 \newtheorem{proposition}{Proposition}[section]
 \newtheorem{definition}{Definition}[section]
\newtheorem{remark}{Remark}
 \numberwithin{equation}{section}
\begin{document}

\begin{frontmatter}


 
 \author[authorJMC]{Jean-Michel Coron\fnref{authJMC}}
 \ead{coron@ann.jussieu.fr}

 \author[authorLH,authorJMC]{Long Hu\fnref{LH}}
 \ead{hu@ann.jussieu.fr}

 \author[authorJMC]{Guillaume Olive\fnref{authGO}}
\ead{oliveg@ljll.math.upmc.fr}


 \address[authorJMC]{Sorbonne Universit\'{e}s, UPMC Univ Paris 06, UMR 7598, Laboratoire
Jacques-Louis Lions, 4 place Jussieu, 75252 Paris cedex 05, France.}
\address[authorLH]{School of Mathematics, Shandong University, Jinan, Shandong 250100, China. }

 \fntext[authJMC]{JMC was
supported by the ERC advanced grant 266907 (CPDENL) of the 7th Research Framework
Programme (FP7).}
 \fntext[LH]{LH was
supported by the ERC advanced grant 266907 (CPDENL) of the 7th Research Framework
Programme (FP7).}
\fntext[authGO]{GO was partially
supported by the ERC advanced grant 266907 (CPDENL) of the 7th Research Framework
Programme (FP7).}

\title{Stabilization and controllability of first-order integro-differential hyperbolic equations}

%

\begin{abstract}
In the present article we study the stabilization of first-order linear integro-differential hyperbolic equations.
For such equations we prove that the stabilization in finite time is equivalent to the exact controllability property.
The proof relies on a Fredholm transformation that maps the original system into a finite-time stable target system.
The controllability assumption is used to prove the invertibility of such a transformation.
Finally, using the method of moments, we show in a particular case that the controllability is reduced to the criterion of Fattorini.
\end{abstract}

\begin{keyword}
Integro-differential equation\sep Stabilization\sep Controllability\sep Backstepping\sep Fredholm integral.


\end{keyword}

\end{frontmatter}



\section{Introduction and main results}

The purpose of this article is the study of the stabilization and controllability properties of the equation
\begin{equation}\label{syst init}
\left\{
\begin{array}{rll}
u_t(t,x) -u_x(t,x) =&\ds \int_0^L \g(x,y)u(t,y) \, dy, & t \in (0,T), \, x \in (0,L), \\
u(t,L)=& U(t), & t \in (0,T), \\
u(0,x) =& u^0(x), & x \in (0,L).
\end{array}
\right.
\end{equation}
In \eqref{syst init}, $T>0$ is the time of control, $L>0$ the length of the domain.
$u^0$ is the initial data and $u(t,\cdot):[0,L] \longrightarrow \C$ is the state at time $t \in [0,T]$, $\g:(0,L)\times(0,L) \longrightarrow \C$ is a given function in $L^2((0,L)\times(0,L))$ and, finally, $U(t) \in \C$ is the boundary control at time $t \in (0,T)$.

The stabilization and controllability of \eqref{syst init} started in \cite{KS}.
The authors proved that the equation
$$
\left\{
\begin{array}{rll}
u_t(t,x) -u_x(t,x) =&\ds \int_0^x \g(x,y)u(t,y) \, dy+f(x)u(t,0), & t \in (0,T), \, x \in (0,L), \\
u(t,L)=& U(t), & t \in (0,T), \\
u(0,x) =& u^0(x), & x \in (0,L),
\end{array}
\right.
$$
with $\g$ and $f$ continuous, is always stabilizable in finite time (see also \cite{Nak} for the same equation with the nonlocal boundary condition $u(t,L)=\int_0^L u(t,y) \gamma(y) \,dy + U(t)$ with $\gamma$ continuous).
The proof uses the  backstepping approach introduced and developed by M.~Krstic and his co-workers (see, in particular,  the pioneer articles \cite{2003-Bokovic-Balogh-Krstic-MCSS, 2003-Liu-SICON, 2004-Smyshlyaev-Krstic-IEEE} and the reference book \cite{KSbook}).
This approach consists in  mapping \eqref{syst init} into the following finite-time stable target system
$$
\left\{
\begin{array}{rll}
w_t(t,x) -w_x(t,x) =& 0, & t \in (0,T), \, x \in (0,L), \\
w(t,L)=& 0, & t \in (0,T), \\
w(0,x) =& w^0(x), & x \in (0,L),
\end{array}
\right.
$$
by means of the Volterra transformation of the second kind
\begin{equation}\label{volt transfo}
u(t,x)=w(t,x)-\int_0^x \k(x,y)w(t,y)dy,
\end{equation}
where the kernel $\k$ has to satisfy some PDE in the triangle $0 \leq y \leq x \leq L$ with appropriate boundary conditions, the so-called kernel equation.
Let us emphasize that the strength of this method is that the Volterra transformation \eqref{volt transfo} is always invertible (see e.g. \cite[Chapter 2, THEOREM 6]{Hoc}).
Now, if the integral term is not anymore of Volterra type, that is if $\g$ in \eqref{syst init} does not satisfy
\begin{equation}\label{hyp KS}
\g(x,y)=0, \quad x \leq y,
\end{equation}
then, the Volterra transformation \eqref{volt transfo} can no longer be used (there is no solution to the kernel equation which is supported in the triangle $0 \leq y \leq x \leq L$ in this case, see the equation \eqref{equ hstar} below).
In \cite{BAK}, the authors suggested to replace the Volterra transformation \eqref{volt transfo} by the more general Fredholm transformation
\begin{equation}\label{fred transfo}
u(t,x)=w(t,x)-\int_0^L \k(x,y)w(t,y)dy,
\end{equation}
where $\k \in L^2((0,L)\times(0,L))$ is a new kernel.
However, the problem is now that, unlike the Volterra transformation \eqref{volt transfo}, the Fredholm transformation \eqref{fred transfo} is not always invertible.
In \cite{BAK}, the authors proved that, if $\g$ is small enough, then the transformation \eqref{fred transfo} is indeed invertible, see \cite[Theorem 9]{BAK}.
They also gave some sufficient conditions in the case $\g(x,y)=\g(y)$, see \cite[Theorem 1.11]{BAK}.
Our main result states that we can find a particular kernel $\k$ such that the corresponding Fredholm transformation \eqref{fred transfo} is invertible, if we assume that  \eqref{syst init} is exactly controllable at time $L$.
Finally, let us point out that Fredholm transformations have also been used to prove the exponential stabilization for a Korteweg-de Vries equation in \cite{2014-Coron-Lu-JMPA} and for a Kuramoto-Sivashinsky equation in \cite{2015-Coron-Lu-JDE}.
In these papers also, the existence of the kernel and the invertibility of the associated transformation were established under a controllability assumption.
However, our proof is of a completely different spirit than the one given in these articles.

\subsection{Well-posedness}

Multiplying formally \eqref{syst init} by the complex conjugate of a smooth function $\ext{\phi}$ and integrating by parts, we are lead to the following definition of solution:
\begin{definition}
Let $u^0 \in L^2(0,L)$ and $U \in L^2(0,T)$.
We say that a function $u$ is a (weak) solution to \eqref{syst init} if $u \in C^0([0,T];L^2(0,L))$ and 
\begin{multline}\label{weak sol}
\int_0^{\tau} \int_0^L u(t,x)\ext{\left(-\phi_t(t,x)+\phi_x(t,x)-\int_0^L \ext{\g(y,x)}\phi(t,y) \, dy\right)} \, dx dt
\\
+\int_0^L u(\tau,x) \ext{\phi(\tau,x)} \, dx
-\int_0^L u^0(x) \ext{\phi(0,x)} \, dx
-\int_0^{\tau} U(t)\ext{\phi(t,L)} \, dt=0,
\end{multline}
for every $\phi \in C^1([0,\tau]\times[0,L])$ such that $\phi(\cdot,0)=0$, and every $\tau \in [0,T]$.
\end{definition}

Let us recall that \eqref{syst init} can equivalently be rewritten in the abstract form
\begin{equation}\label{abst}
\left\{
\begin{array}{rll}
\ds \ddt u=& \opA u + \opB U, & t \in (0,T), \\
u(0) =& u^0, &
\end{array}
\right.
\end{equation}
where we can identify the operators $\opA$ and $\opB$ through their adjoints by taking formally the scalar product of \eqref{abst} with a smooth function $\phi$ and then comparing with \eqref{weak sol}.
The operator $\opA:\dom{\opA} \subset L^2(0,L) \longrightarrow L^2(0,L)$ is thus given by
\begin{equation}\label{def opA}
\opA u = \ds u_x+\int_0^L \g(\cdot,y)u(y) \, dy,
\end{equation}
with
$$\dom{\opA} = \ens{u \in H^1(0,L) \quad \middle| \quad u(L)=0}.$$
Clearly, $\opA$ is densely defined, and its adjoint $\opA^*: \dom{\opA^*} \subset L^2(0,L) \longrightarrow L^2(0,L)$ is
\begin{equation}\label{opAstar}\opA^* z= \ds -z_x+\int_0^L \ext{\g(y,\cdot)}z(y) \, dy,
\end{equation}
with
$$\dom{\opA^*}=\ens{z \in H^1(0,L) \quad \middle| \quad z(0)=0}.$$
Using the Lumer-Philips' theorem (see e.g. \cite[Chapter 1, Corollary 4.4]{Paz}), we can prove that $\opA$ generates a $C_0$-group ${(S(t))}_{t \in \R}$.

In particular, $\opA^*$ is closed and its domain $\dom{\opA^*}$ is then a Hilbert space, equipped with the scalar product associated with the graph norm $\norm{z}_{\dom{\opA^*}}=(\norm{z}_{L^2}^2+\norm{\opA^*z}_{L^2}^2)^{1/2}$, $z \in \dom{\opA^*}$.
Observe that
\begin{equation}\label{equiv norm}
\norm{\cdot}_{\dom{\opA^*}} \mbox{ and } \norm{\cdot}_{H^1(0,L)} \mbox{ are equivalent norms on } \dom{\opA^*}.
\end{equation}

On the other hand, the operator $\opB \in \lin{\C,\dom{\opA^*}'}$ is
\begin{equation}\label{def opB}
\ps{\opB U}{z}{\dom{\opA^*}',\dom{\opA^*}}= U \ext{z(L)}.
\end{equation}
Note that $\opB$ is well defined since $BU$ is continuous on $H^1(0,L)$ (by the trace theorem $H^1(0,L) \hookrightarrow C^0([0,L])$) and since we have \eqref{equiv norm}.
Its adjoint $\opB^* \in \lin{\dom{\opA^*},\C}$ is
\begin{equation}\label{opBstar}
\opB^*z=z(L).
\end{equation}
One can prove that $\opB$ satisfies the following so-called admissibility condition\footnote{The proof is analogous to the one of Lemma \ref{lemma 2 app} in \ref{app 2}.}:
\begin{equation}\label{admi}
\exists C>0, \quad \int_0^{T} \abs{\opB^*S(T-t)^*z}^2 \, dt \leq C \norm{z}_{L^2(0,L)}^2, \quad \forall z \in \dom{\opA^*}.
\end{equation}
Note that $\opB^*S(T-\cdot)^*z$ makes sense in \eqref{admi} since $S(T-\cdot)^*z \in \dom{\opA^*}$ for $z \in \dom{\opA^*}$, while it does not in general if $z$ is only in $L^2(0,L)$.
Thus, \eqref{admi} allows us to continuously extend in a unique way the map $z \longmapsto \opB^*S(T-\cdot)^*z$ to the whole space $L^2(0,L)$ and give in particular a sense to $\opB^*S(T-\cdot)^*z$ for $z \in L^2(0,L)$.
We shall keep the same notation to denote this extension.

Finally, we recall that, since $\opA$ generates a $C_0$-semigroup and $\opB$ is admissible, for every $u^0 \in L^2(0,L)$ and every $U \in L^2(0,T)$, there exists a unique solution $u \in C^0([0,T];L^2(0,L))$ to \eqref{syst init}.
Moreover, there exists $C>0$ (which does not depend on $u^0$ nor $U$) such that
$$\norm{u}_{C^0([0,T];L^2(0,L))} \leq C \left(\norm{u^0}_{L^2(0,L)}+\norm{U}_{L^2(0,T)}\right).$$
See e.g. \cite[Theorem 2.37]{Cor} and \cite[Section 2.3.3.1]{Cor}.

\subsection{Controllability and stabilization}

Let us now recall the definitions of the properties we are interested in.

\begin{definition}
We say that \eqref{syst init} is exactly controllable at time $T$ if, for every $u^0,u^1 \in L^2(0,L)$, there exists $U \in L^2(0,T)$ such that the corresponding solution $u$ to \eqref{syst init} satisfies
$$u(T)=u^1.$$
If the above property holds for $u^1=0$, we say that \eqref{syst init} is null-controllable at time $T$.
\end{definition}

\begin{remark}\label{rem nc}
Since $\opA$ generates a group, \eqref{syst init} is exactly controllable at time $T$ if, and only if, \eqref{syst init} is null-controllable at time $T$ (see e.g. \cite[Theorem 2.41]{Cor}).
\end{remark}

\begin{definition}\label{def stab}
We say that \eqref{syst init} is stabilizable in finite time $T$ if there exists a bounded linear map $\Gamma:L^2(0,L) \longrightarrow \C$ such that, for every $u^0 \in L^2(0,L)$, the solution $u \in C^0([0,+\infty);L^2(0,L))$ to
\begin{equation}\label{syst fb}
\left\{
\begin{array}{rll}
u_t(t,x) -u_x(t,x) =&\ds \int_0^L \g(x,y)u(t,y) \, dy, & t \in (0,+\infty), \, x \in (0,L), \\
u(t,L)=& \Gamma u(t), & t \in (0,+\infty), \\
u(0,x) =& u^0(x), & x \in (0,L),
\end{array}
\right.
\end{equation}
satisfies
\begin{equation}\label{uzat}
u(t)=0, \quad \forall t \geq T.
\end{equation}
\end{definition}

Note that \eqref{syst fb} is well-posed.
Indeed, by the Riesz representation theorem, there exists $\gamma \in L^2(0,L)$ such that
\begin{equation}\label{fb riesz}
\Gamma u=\int_0^L u(y)\ext{\gamma(y)} \, dy,
\end{equation}
and \eqref{syst fb} with \eqref{fb riesz} is well-posed (see e.g. \cite[Theorem 2.1]{Nak}).

\begin{remark}\label{rem stab nc}
Let us recall here some links between stabilization and controllability.
Clearly, stabilization in finite time $T$ implies null-controllability at time $T$.
It is also well-known that in finite dimension (that is when $\opA$ and $\opB$ are matrices) controllability is equivalent to exponential stabilization at any decay rate, see e.g. \cite[PART I, Theorem 2.9]{Zab}.
Finally, for bounded operators $\opB$ (which is not the case here though), null-controllability at some time implies exponential stabilization, see e.g. \cite[PART IV, Theorem 3.3]{Zab}.
We refer to \cite{JZ1999} and the references therein for recent results on the exponential stabilization of one-dimensional systems generated by $C_0$-groups (including then \eqref{syst init}) and to \cite{BT} for the exponential stabilization of systems generated by analytic $C_0$-semigroups.
\end{remark}

\subsection{Main results}

Let us introduce the triangles
$$\Tau_-=\ens{(x,y) \in (0,L) \times (0,L) \quad \middle| \quad x>y},
\qquad
\Tau_+=\ens{(x,y) \in (0,L) \times (0,L) \quad \middle| \quad x<y}.$$
For the stabilization, we will always assume that
\begin{equation}\label{reg G}
\g \in  H^1(\Tau_-) \cap H^1(\Tau_+).
\end{equation}
This means that we allow integral terms whose kernel has a discontinuity along the diagonal of the square $(0,L)\times(0,L)$:
$$\int_0^x \g_1(x,y)u(t,y) \, dy
+\int_x^L \g_2(x,y)u(t,y) \, dy,$$
with $\g_1,\g_2 \in H^1((0,L)\times(0,L))$.
We gathered in \ref{app 1} some properties of the functions of $H^1(\Tau_-) \cap H^1(\Tau_+)$.

Our main result is then the following:
\begin{theorem}\label{thm stab}
Assume that \eqref{reg G} holds.
Then, \eqref{syst init} is stabilizable in finite time $L$ if, and only if, \eqref{syst init} is exactly controllable at time $L$.
\end{theorem}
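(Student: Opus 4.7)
\textbf{The easy direction} (stabilization at time $L$ implies exact controllability at time $L$) is immediate from the definitions: for any $u^0 \in L^2(0,L)$, the feedback $\Gamma$ furnished by Definition~\ref{def stab} drives the trajectory to zero at time $L$, so $u^1 = 0$ is reachable from any initial state, i.e.\ \eqref{syst init} is null-controllable at time $L$. By Remark~\ref{rem nc} this is equivalent to exact controllability at time $L$.

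\textbf{The hard direction.} I will seek a feedback $\Gamma$ such that the closed-loop system \eqref{syst fb} is conjugate, via a Fredholm transformation $T$ of the form \eqref{fred transfo}, to the pure transport target system
\begin{equation*}
w_t - w_x = 0, \qquad w(t,L) = 0,
\end{equation*}
which vanishes identically after time $L$. Substituting \eqref{fred transfo} into \eqref{syst init}, using $w_t = w_x$ in the target system, and integrating by parts, one formally forces the kernel $\k \in L^2((0,L)\times(0,L))$ to satisfy the nonlocal transport equation
\begin{equation}\label{plan-kernel}
\k_x(x,y) + \k_y(x,y) + \int_0^L \g(x,s)\, \k(s,y)\, ds = \g(x,y), \qquad (x,y)\in(0,L)^2,
\end{equation}
together with the boundary condition $\k(x,0)=0$ for $x\in(0,L)$, while the feedback must read
\begin{equation*}
\Gamma u = -\int_0^L \k(L,y)\,(T^{-1}u)(y)\,dy.
\end{equation*}

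\textbf{The main obstacle.} The characteristic direction of \eqref{plan-kernel} being $(1,1)$, the condition $\k(x,0)=0$ only determines $\k$ on the triangle $\Tau_-$; on $\Tau_+$ the trace $\k(0,\cdot)$ remains a free parameter. For any choice of this trace in $L^2(0,L)$, the kernel equation can be integrated along characteristics by a standard contraction argument, inheriting from \eqref{reg G} the piecewise $H^1$ regularity; the corresponding transformation $T = I - K$, where $K$ is the Hilbert--Schmidt operator of kernel $\k$, is then a compact perturbation of the identity on $L^2(0,L)$, hence, by the Fredholm alternative, invertible if and only if it is injective. Proving injectivity is the crux of the argument and the step where the exact controllability hypothesis must enter.

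\textbf{Exploiting controllability.} The plan is to pick the free trace $\k(0,\cdot)$ so that the exact controllability of \eqref{syst init} at time $L$ forces $\ker T = \{0\}$. By duality, exact controllability at time $L$ is equivalent to an observability inequality for the adjoint group generated by $\opA^*$, and in particular to a Fattorini--Hautus-type unique continuation property for $\opA^*$. I will translate any hypothetical $w\in\ker T\setminus\{0\}$ into a homogeneous companion of \eqref{plan-kernel} producing a non-trivial eigenelement of $\opA^*$ that would be undetected by $\opB^*$, in contradiction with the unique continuation above. Once $T$ is invertible, $\Gamma$ is automatically a bounded linear functional on $L^2(0,L)$ by the Riesz representation theorem, and the intertwining $T A_0 = (\opA+\opB\Gamma)T$ (with $A_0$ the pure-transport generator) transfers the finite-time decay of the target system to \eqref{syst fb}, giving \eqref{uzat} at $t=L$. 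The hard part is carrying out the injectivity step quantitatively, i.e.\ producing the precise choice of $\k(0,\cdot)$ that converts the observability inequality into the absence of kernel for~$T$.
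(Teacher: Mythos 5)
Your architecture matches the paper's (Fredholm transformation onto the pure transport target, kernel PDE along the characteristic direction $(1,1)$, Fredholm alternative reducing invertibility to injectivity, controllability entering through a Fattorini-type condition), and your kernel equation and the condition $\k(x,0)=0$ are the correct transcription of \eqref{equ hstar}--\eqref{H regul traduit}. But the proof stops exactly where the two load-bearing ideas are needed, and you say so yourself (``the hard part is carrying out the injectivity step''). First missing idea: the ``precise choice'' of the free data is not some clever trace $\k(0,\cdot)$ to be reverse-engineered from an observability inequality; it is the \emph{additional outflow condition} $\k(\cdot,L)=0$ (equivalently \eqref{hstar final} for $\k^*$), which is not forced by the intertwining relation but is imposed on purpose so that $\opB^*\K^*z=\int_0^L \k(L-\text{trace})\,z=0$ for all $z$, hence $\ker(\Id-\K^*)\subset\ker\opB^*$. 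The point you miss is that requiring the kernel to vanish on \emph{both} edges $\{y=0\}$ and $\{y=L\}$ (in your variables) while still solving the inhomogeneous transport equation is itself a null-controllability problem for an equation of the same type as \eqref{syst init}, posed on the square $(0,L)^2$ with $x$ playing the role of time; this is Proposition \ref{prop exist k}, and it is the place where exact controllability at the \emph{exact} time $L$ (and not merely some $T\ge L$) is indispensable. Your plan uses controllability only once, for injectivity, and therefore cannot produce this kernel.

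Second missing idea: the injectivity mechanism. Your sketch (``translate $w\in\ker T\setminus\{0\}$ into an eigenelement of $\opA^*$ undetected by $\opB^*$'') is the right flavor, but it cannot be executed without the extra boundary condition above. The paper's Lemma \ref{prop inv} runs on the adjoint: since $\ker(\Id-\K^*)$ is finite-dimensional (Fredholm), contained in $\dom{\opA^*}$ (because $\k^*_+(0,\cdot)=0$ gives the regularizing property \eqref{H regul 2}), contained in $\ker\opB^*$ (because of \eqref{hstar final}), and invariant under $\opA^*$ (because the intertwining relation \eqref{equ kerb} holds on $\ker\opB^*$), a nonzero kernel would contain an eigenfunction of $\opA^*$ lying in $\ker\opB^*$, contradicting \eqref{ucp}. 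Each of the four hypotheses is tied to a specific boundary condition or structural fact that your proposal has not secured. A further small inaccuracy: exact controllability is \emph{not} known to be equivalent to the Fattorini condition \eqref{ucp} for general $\g$ (that equivalence is Theorem \ref{thm cont}, proved only under \eqref{hyp gdx} and otherwise conjectural); only the implication ``exact controllability $\Rightarrow$ \eqref{ucp}'' is available, which fortunately is all the injectivity lemma needs.
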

(note that the necessary part is clear from Remark \ref{rem stab nc} and Remark \ref{rem nc}).

Thus, we see that we have to study the controllability of \eqref{syst init} at the optimal time of control $T=L$ (we recall that, in the case $\g=0$, \eqref{syst init} is exactly controllable at time $T$ if, and only if, $T \geq L$).
We will show that this property is characterized by the criterion of Fattorini in the particular case
\begin{equation}\label{hyp gdx}
\g(x,y)=\g(x), \quad \g \in L^2(0,L).
\end{equation}

Indeed, the second result of this paper is
\begin{theorem}\label{thm cont}
Assume that \eqref{hyp gdx} holds.
Then, \eqref{syst init} is exactly controllable at time $L$ if, and only if,
\begin{equation}\label{ucp}
\ker(\lambda-\opA^*) \cap \ker \opB^*=\ens{0}, \quad \forall \lambda \in \C.
\end{equation}
\end{theorem}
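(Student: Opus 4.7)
The easy direction is standard: if $\opA^* z = \lambda z$ with $z \neq 0$ and $\opB^* z = z(L) = 0$, then for any control $U$ the coordinate $\langle u(t), z\rangle_{L^2}$ satisfies the autonomous ODE
\[
\frac{d}{dt}\langle u(t), z\rangle_{L^2} = \langle \opA u(t), z\rangle_{L^2} + \langle \opB U(t), z\rangle_{\dom{\opA^*}', \dom{\opA^*}} = \overline{\lambda}\,\langle u(t), z\rangle_{L^2} + U(t)\,\overline{z(L)} = \overline{\lambda}\,\langle u(t), z\rangle_{L^2},
\]
so $\langle u(T), z\rangle_{L^2} = e^{\overline{\lambda} T}\langle u^0, z\rangle_{L^2}$ is insensitive to $U$, contradicting exact controllability unless $z = 0$.

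For the converse I would use the method of moments. Under \eqref{hyp gdx}, the eigenvalue equation $\opA^* z = \lambda z$ with $z(0) = 0$ becomes $-z'(x) + C = \lambda z(x)$, where $C := \int_0^L \overline{\g(y)}\, z(y)\,dy$ is a scalar. Solving produces the candidate eigenvectors
\[
z_\lambda(x) = \frac{1 - e^{-\lambda x}}{\lambda} \quad (\lambda \neq 0), \qquad z_0(x) = x,
\]
(up to multiplicative constants), while $\lambda \neq 0$ must be a root of the transcendental characteristic equation
\[
\Delta(\lambda) \;:=\; \lambda - \int_0^L \overline{\g(y)}\bigl(1 - e^{-\lambda y}\bigr)\,dy \;=\; 0.
\]
Since $\Delta$ is entire of exponential type $L$, classical Paley--Wiener / Hadamard factorization theory furnishes a sequence of roots $\{\lambda_n\}_{n \in \Z}$ with $\mathrm{Im}\,\lambda_n \sim 2\pi n / L$ and $\mathrm{Re}\,\lambda_n$ of at most logarithmic growth. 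Noting that at $\lambda = 0$ one has $z_0(L) = L \neq 0$, the Fattorini condition \eqref{ucp} is equivalent to $z_{\lambda_n}(L) = (1 - e^{-\lambda_n L})/\lambda_n \neq 0$ for every $n$, i.e., $\lambda_n \neq 2\pi i k/L$ for all $k \in \Z \setminus \ens{0}$.

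The main step is to establish that $\{z_{\lambda_n}\}_{n \in \Z}$ is a Riesz basis of $L^2(0,L)$ (augmenting by finitely many generalized eigenvectors if $\Delta$ has multiple roots). Granted this, expanding $u^0$ in the basis and using the ODE $a_n'(t) = \overline{\lambda_n}\, a_n(t) + U(t)\,\overline{z_{\lambda_n}(L)}$ for $a_n(t) := \langle u(t), z_{\lambda_n}\rangle_{L^2}$, the null-controllability at time $T = L$ becomes equivalent to the moment problem
\[
\int_0^L U(L-t)\,e^{\overline{\lambda_n} t}\,dt \;=\; -\,\frac{e^{\overline{\lambda_n} L}}{\overline{z_{\lambda_n}(L)}}\,\langle u^0, z_{\lambda_n}\rangle_{L^2}, \qquad n \in \Z,
\]
whose right-hand side belongs to $\ell^2$ by admissibility \eqref{admi} and whose denominators are nonzero by Fattorini. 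The hard part is to solve this moment problem \emph{exactly} at the critical time $T = L$ (anything shorter fails already for $\g = 0$). This is achieved by constructing a family biorthogonal to $\{e^{\overline{\lambda_n} t}\}$ in $L^2(0,L)$, whose existence at the critical length follows, once the sharp asymptotics of $\lambda_n$ are combined with the Riesz basis property, from classical results on completeness and minimality of exponential systems (Paley--Wiener, Levin, Kadec). Summing the biorthogonal series then reconstructs $U \in L^2(0,L)$, closing the argument.
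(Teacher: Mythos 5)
Your necessity argument is fine (modulo the usual justification of differentiating $\langle u(t),z\rangle$, which follows from the duality identity \eqref{sol transpo}). The converse, however, has a genuine gap at its central step: the family $\ens{z_{\lambda_n}}$ of eigenfunctions of $\opA^*$ is in general \emph{not} a Riesz basis of $L^2(0,L)$ --- it need not even be infinite, nor nonempty. Under \eqref{hyp gdx} the operator $\opA^*$ is a rank-one perturbation of $A_0^*z=-z'$ with $z(0)=0$, whose resolvent $z(x)=\int_0^x e^{-\lambda(x-s)}f(s)\,ds$ is entire in $\lambda$, so that $\sigma(A_0^*)=\emptyset$; for $\g=0$ your characteristic function is $\Delta(\lambda)=\lambda$ and the consistency condition $C=\int_0^L\overline{\g(y)}z(y)\,dy$ forces $z\equiv 0$ even at $\lambda=0$, so $\opA^*$ has no eigenvalues at all. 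For general $\g$ the zeros of $\Delta$ are zeros of a quasi-polynomial whose root chains, when they exist, drift into the left half-plane along logarithmic curves whose location depends on the behaviour of $\g$ near $y=L$; the asymptotics $\mathrm{Im}\,\lambda_n\sim 2\pi n/L$ with controlled real parts that you invoke are not justified and fail in degenerate cases (e.g.\ $\g$ supported away from $y=L$, or $\g$ small, where only finitely many eigenvalues survive). Consequently the expansion of $u^0$ in eigenfunctions of $\opA^*$, and with it your reduction to a moment problem and the appeal to Paley--Wiener/Levin/Kadec for a biorthogonal family, collapses at the first step.

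The paper's proof avoids precisely this obstruction by a preliminary reduction you are missing: it replaces the boundary condition $u(t,L)=U(t)$ by the periodic one $\ut(t,L)-\ut(t,0)=\Ut(t)$ and proves (Proposition \ref{prop equiv}, via $U(t)=\ut(t,0)+\Ut(t)$ and a hidden-regularity estimate for the trace $\ut(\cdot,0)$) that the two control problems are equivalent. The periodic operator $\opAt^*$ is a rank-one perturbation of the skew-adjoint periodic derivative, so its spectrum is computed exactly: $\lambda_k=2ik\pi/L$ for $k\neq 0$ together with $\lambda_0=\int_0^L\overline{\g}$, with explicit eigenfunctions $\phi_k$ quadratically close to the Fourier basis. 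Bari's theorem then yields the Riesz basis property, the Fattorini condition \eqref{ucp} becomes $\opBt^*\phi_k\neq 0$ with $\opBt^*\phi_k\to 1$, and the exponentials $\{e^{-\overline{\lambda_k}t}\}$ form a Riesz (hence Riesz--Fischer) basis of $L^2(0,L)$, which solves the moment problem at the critical time $T=L$ with no deep theory of exponential systems. If you want to salvage your outline, the missing idea is this passage to periodic boundary conditions before doing any spectral analysis.
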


Actually, we conjecture that Theorem \ref{thm cont} remains true without assuming \eqref{hyp gdx}.

\begin{remark}\label{rem ac}
In fact, \eqref{ucp} is a general necessary condition for the approximate controllability.
Let us recall that we say that \eqref{syst init} is approximately controllable at time $T$ if, for every $\epsilon>0$, for every $u^0,u^1 \in L^2(0,L)$, there exists $U \in L^2(0,T)$ such that the corresponding solution $u$ to \eqref{syst init} satisfies
$$\norm{u(T)-u^1}_{L^2(0,L)} \leq \epsilon.$$
Clearly, it is a weaker property than exact controllability.
Let us also recall that this property is equivalent to the following dual one (see e.g. \cite[Theorem 2.43]{Cor}):
\begin{equation}\label{ucp full}
\forall z \in L^2(0,L), \quad \Big(\opB^*S(t)^*z=0 \mbox{ for a.e. } t \in (0,T) \Big) \Longrightarrow z=0.
\end{equation}
Thus, we see that \eqref{ucp} is nothing but the property \eqref{ucp full} only for $z \in \ker(\lambda-\opA^*)$ since $S(t)^*z=e^{\lambda t}z$ for $z \in \ker(\lambda-\opA^*)$.
This condition \eqref{ucp} is misleadingly known as the Hautus test \cite{Hau} in finite dimension, despite it has been introduced earlier by H.O.~Fattorini in \cite{Fat} and in a much larger setting.
Finally, let us mention that it has also been proved in \cite{BT} that \eqref{ucp} characterizes the exponential stabilization of parabolic systems.
\end{remark}

\begin{remark}
We will exhibit functions $\g$ such that \eqref{ucp} does not hold for an arbitrary large number of $\lambda$, see Remark \ref{rem exist G} below.
On the other hand, we can check that \eqref{ucp} is satisfied for any $g \in L^2((0,L)\times(0,L))$ satisfying one of the following conditions:
\begin{enumerate}
\item
$\opA^*$ has no eigenvalue (as it is the case when $\g=0$).
\item\label{g small}
$\g$ is small enough: $\norm{g}_{L^2}<\frac{\sqrt{2}}{L}$.
\item\label{g volt}
$\g$ is of Volterra type (that is it satisfies \eqref{hyp KS}).
\end{enumerate}
The point \ref{g small} follows from the invertibility of transformations $\Id-\G$ for $\norm{\G}_{\lin{L^2}}<1$.
The point \ref{g volt} follows from the invertibility of Volterra operators.
\end{remark}

Let us notice that we can also consider equations of the more general form
$$
\left\{
\begin{array}{rll}
\ut_t(t,x) -\ut_x(t,x) =&\ds  \int_0^L \tilde{\g}(x,y)\ut(t,y) \, dy + f(x)\ut(t,0)+d(x)\ut(t,x), & t \in (0,T), \, x \in (0,L), \\
\ut(t,L)=& \ds \int_0^L  \ut(t,y) \gamma(y)\, dy
+ \Ut(t), & t \in (0,T), \\
\ut(0,x) =& \uzt(x), & x \in (0,L),
\end{array}
\right.
$$
where $f,d,\gamma:(0,L) \longrightarrow \C$ and $\tilde{\g}:(0,L)\times(0,L) \longrightarrow \C$ are regular enough.
Performing a transformation of Volterra type, it can actually be reduced to an equation like \eqref{syst init}.
See \cite[Theorem 3.2]{Nak} for more details.

Let us conclude the introduction by pointing out that Theorem \ref{thm stab} still holds if we consider
states and controls taking their values into $\R$ instead of $\C$ provided that
\begin{equation}\label{g(x,y)real}
g(x,y)\in \R \, \mbox{ for a.e. } (x,y)\in (0,L)\times (0,L).
\end{equation}
This follows from the fact that, if \eqref{g(x,y)real} holds and if the control system \eqref{syst init}, with real valued states and controls, is exactly controllable at time $L$, then the functions $\k$ and $U$ constructed in the proof of Proposition \ref{prop exist k} below are real valued functions.
Concerning Theorem \ref{thm cont}, it also still holds for real valued states and controls if $g$ is real valued (but, of course, we still have to consider  in \eqref{ucp} complex valued functions and complex $\lambda$).

\section{Finite-time stabilization}\label{sect stab}

\subsection{Presentation of the method}

Let us write $\opA=A_0+G$ where the unbounded linear operator $A_0:\dom{A_0} \subset L^2(0,L) \longrightarrow L^2(0,L)$ is defined by
$$A_0 u = \ds u_x, \quad \dom{A_0} =\dom{\opA},$$
and the bounded linear operator $\G: L^2(0,L) \longrightarrow L^2(0,L)$ is defined by
$$\G u = \ds \int_0^L \g(\cdot,y)u(y) \, dy.$$
Note that the adjoint $\opA_0^*:\dom{\opA_0^*}\subset L^2(0,L) \longrightarrow L^2(0,L)$ of $\opA_0$ is the operator
$$\opA_0^*z=-z_x, \quad \dom{\opA_0^*} =\ens{z \in H^1(0,L) \quad \middle| \quad z(0)=0}.$$

We  first perform some formal computations to explain the ideas of our method.
We recall that the strategy is to map the initial equation
\begin{equation}\label{formal fb}
\left\{
\begin{array}{rll}
\ds \ddt u=& (\opA+\opB\Gamma)u, & t \in (0,+\infty), \\
u(0) =& u^0, &
\end{array}
\right.
\end{equation}
into the finite-time stable target equation
\begin{equation}\label{syst targ}
\left\{
\begin{array}{rll}
\ds \ddt w=& A_0 w, & t \in (0,+\infty), \\
w(0) =& w^0, &
\end{array}
\right.
\end{equation}
for some operator $\Gamma$ and by means of a transformation $P$ (independent of the time $t$):
$$u=Pw.$$
If $u=Pw$ where $w$ solves \eqref{syst targ}, then
\begin{equation}\label{formal 1}
\ddt u=\ddt \left(Pw\right)=P\left(\ddt w\right)=PA_0 w,
\end{equation}
and
\begin{equation}\label{formal 2}
(\opA+\opB\Gamma)u
=(\opA P+\opB\Gamma P)w.
\end{equation}
As a result, $u$ solves \eqref{formal fb} if the right-hand sides of \eqref{formal 1} and \eqref{formal 2} are equals, that is, if $P$ and $\Gamma$ satisfy
$$PA_0=\opA P+\opB \Gamma P.$$
Taking the adjoints, this is equivalent to
\begin{equation}\label{rela fonda}
A_0^*P^*=P^*\opA^*+P^*\Gamma^*\opB^*.
\end{equation}
By \eqref{rela fonda}, we mean that
\begin{empheq}[left=\empheqlbrace]{align}
&P^*\left(\dom{\opA^*}\right) \subset \dom{\opA^*_0}=\dom{\opA^*}, \label{P regul}
\\
& \opA_0^*P^*z=P^*\opA^*z+P^*\Gamma^*\opB^*z,\quad \forall z \in \dom{\opA^*}. \label{rela fonda-explicit}
\end{empheq}

The following proposition gives the rigorous statement of what we have just discussed (the proof is given in  \ref{app proof prop}).
\begin{proposition}\label{prop equiv sol}
Assume that there exist a bounded linear operator $P:L^2(0,L) \longrightarrow L^2(0,L)$ and a bounded linear form $\Gamma: L^2(0,L) \longrightarrow \C$ such that:
\begin{enumerate}
\item
\eqref{P regul}-\eqref{rela fonda-explicit} hold.

\item
$P$ is invertible.
\end{enumerate}
Then, for every $u^0 \in L^2(0,L)$, if $w \in C^0([0,T];L^2(0,L))$ denotes the solution to \eqref{syst targ} with $w^0=P^{-1}u^0$, then $u=Pw$ is the solution to \eqref{syst fb} and it satisfies \eqref{uzat}.
\end{proposition}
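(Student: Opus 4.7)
Take $w\in C^0([0,+\infty);L^2(0,L))$ to be the solution of the target transport system \eqref{syst targ} with initial datum $w^0:=P^{-1}u^0$ (well defined by the invertibility of $P$), and set $u:=Pw$. Boundedness of $P$ and continuity of $w$ yield $u\in C^0([0,+\infty);L^2(0,L))$ with $u(0)=u^0$. For the finite-time extinction \eqref{uzat}, note that \eqref{syst targ} has the explicit solution $w(t,x)=w^0(x+t)\mathbf{1}_{x+t<L}$, so $w(t)\equiv 0$ for $t\ge L$; with $T\geq L$ (as in the application to Theorem~\ref{thm stab}), $u(t)=Pw(t)\equiv 0$ for $t\ge T$.

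\textbf{Identifying $u$ with the closed-loop solution.} By uniqueness of solutions to \eqref{syst fb}, it suffices to verify that $u=Pw$ satisfies the equation in the weak sense. In abstract form this amounts to showing, for every $z\in\dom{\opA^*}$,
\begin{equation*}
\ps{u(t)}{z}{L^2}=\ps{u^0}{z}{L^2}+\int_0^t\!\left(\ps{u(s)}{\opA^*z}{L^2}+\Gamma u(s)\,\overline{\opB^*z}\right) ds.
\end{equation*}
Rewrite the left-hand side as $\ps{w(t)}{P^*z}{L^2}$; by \eqref{P regul}, $P^*z\in\dom{\opA_0^*}$, so the weak formulation of \eqref{syst targ} (with the time-independent test element $P^*z$) gives
\begin{equation*}
\ps{w(t)}{P^*z}{L^2}=\ps{w^0}{P^*z}{L^2}+\int_0^t \ps{w(s)}{\opA_0^*P^*z}{L^2}\,ds.
\end{equation*}
Substituting \eqref{rela fonda-explicit} for $\opA_0^*P^*z$ and redistributing $P^*$ through $\ps{w}{P^*\opA^*z}{L^2}=\ps{u}{\opA^*z}{L^2}$ and $\ps{w}{P^*\Gamma^*\opB^*z}{L^2}=\Gamma u\cdot\overline{\opB^*z}$ recovers exactly the desired identity.

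\textbf{Main obstacle.} The delicate point is to reconcile the test-function definition of weak solution in Definition~1.1 with the abstract integral identity above, and to make sense of the pairings involving the unbounded operator $\opB$. The admissibility condition \eqref{admi} and the boundedness of $\Gamma$ make $s\mapsto \Gamma u(s)\,\overline{\opB^*z}$ integrable, and the closed graph theorem applied to \eqref{rela fonda-explicit} makes $P^*$ bounded from $\dom{\opA^*}$ into $\dom{\opA_0^*}$; these are precisely what is needed to legitimize the substitution step above. Once these technicalities are in place, the proof is just the formal chain \eqref{formal 1}--\eqref{rela fonda} carried through carefully.
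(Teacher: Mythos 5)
Your proof is correct and rests on exactly the same algebraic pivot as the paper's: move $P^{*}$ onto the test object using $\ps{u(t)}{z}{L^2}=\ps{w(t)}{P^{*}z}{L^2}$, use \eqref{P regul} to guarantee that $P^{*}z$ is an admissible test element for the target system, and then substitute \eqref{rela fonda-explicit} to reassemble the closed-loop weak formulation. The route differs in the choice of formulation: the paper tests \eqref{syst fb} against time-dependent functions $\phi\in C^{1}([0,\tau]\times[0,L])$ with $\phi(\cdot,0)=0$ (as in Definition~1.1) and inserts $\psi=P^{*}\phi$ into the weak formulation \eqref{weak sol w} of the target system, after enlarging by density the class of admissible $\psi$ to $L^{2}(0,\tau;H^{1}(0,L))\cap C^{1}([0,\tau];L^{2}(0,L))$; you instead test against time-independent $z\in\dom{\opA^{*}}$ and use the integrated semigroup identity $\ps{w(t)}{\zeta}{L^2}=\ps{w^{0}}{\zeta}{L^2}+\int_{0}^{t}\ps{w(s)}{\opA_{0}^{*}\zeta}{L^2}\,ds$ for $\zeta=P^{*}z$. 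Your version buys a cleaner computation (no time derivatives of test functions, no density argument in the $\psi$-class), at the price of having to invoke the equivalence between Definition~1.1 and the abstract integral characterization of weak solutions, together with uniqueness for the well-posed closed-loop system \eqref{syst fb}; both are standard (see \cite[Section 2.3]{Cor} and the well-posedness remark after Definition~\ref{def stab}), so what you label the ``main obstacle'' should simply be stated as a citation rather than left open. Two small points in your favour: since $\Gamma$ is bounded and $u\in C^{0}$, the integrand $s\mapsto\Gamma u(s)\,\ext{\opB^{*}z}$ is continuous, so no appeal to the admissibility condition \eqref{admi} is actually needed there; and your observation that $w(t)\equiv 0$ for all $t\ge L$ (not merely $w(T)=0$) is what \eqref{uzat} really requires, which the paper's proof glosses over.
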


Let us now "split" the equation \eqref{rela fonda-explicit}.
We recall that $\dom{\opA^*}$ is a Hilbert space and $\opB^*$ is continuous for the norm of $\dom{\opA^*}$ (see the introduction).
Thus, its kernel $\ker \opB^*$ is closed for this norm and we can write the orthogonal decomposition
$$\dom{\opA^*}=\ker \opB^* \oplus \left(\ker \opB^*\right)^{\perp},$$
where $V^{\perp}$ denotes the orthogonal of a subspace $V$ in $\dom{\opA^*}$.
Noting that $\opB^*$ is a bijection from $\left(\ker \opB^*\right)^{\perp}$ to $\C$ (with inverse denoted by $\bmu$), we see that \eqref{rela fonda-explicit} holds if, and only if,
\begin{equation}\label{equ kerb}
A_0^*P^*z-P^*\opA^*z=0, \quad \forall z \in \ker \opB^*,
\end{equation}
and
\begin{equation}\label{equ kerb ort}
P^*\Gamma^*=\left(A_0^*P^*-P^*\opA^*\right)\bmu.
\end{equation}

It follows from this observation that it is enough to establish the existence of $P$ such that \eqref{equ kerb} hold and $P$ is invertible.
The map $\Gamma$ will then be defined as the adjoint of the linear map $\Psi: \C \longrightarrow L^2(0,L)$ defined by
\begin{equation}\label{def gamma abstr}
\Psi =\left((P^*)^{-1}A_0^*P^*-\opA^*\right)\bmu.
\end{equation}
Note that $P^*:\dom{\opA^*}\longrightarrow \dom{\opA^*}$ is continuous by the closed graph theorem, so that $\Psi$ defined by \eqref{def gamma abstr} is bounded.

Let us summarize the discussion:
\begin{proposition}
Let $P:L^2(0,L) \longrightarrow L^2(0,L)$ be a bounded linear operator such that \eqref{P regul} holds and $P$ is invertible.
Then, there exists a bounded linear form $\Gamma: L^2(0,L) \longrightarrow \C$ such that \eqref{rela fonda-explicit} holds if, and only if, $P^*$ satisfies \eqref{equ kerb}.
\end{proposition}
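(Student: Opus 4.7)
The plan is to make rigorous the formal split of \eqref{rela fonda-explicit} that the excerpt has already carried out informally. The forward implication will be immediate: if \eqref{rela fonda-explicit} holds with some bounded $\Gamma$, then for any $z \in \ker \opB^*$ the term $P^*\Gamma^*\opB^*z$ vanishes and \eqref{equ kerb} drops out at once.

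For the converse, assuming \eqref{equ kerb}, I would define $\Gamma$ exactly as suggested by \eqref{def gamma abstr}, namely $\Gamma = \Psi^*$ with $\Psi = ((P^*)^{-1}A_0^*P^* - \opA^*)\bmu$. Once $\Psi$ is known to be a bounded operator $\C \to L^2(0,L)$, the verification of \eqref{rela fonda-explicit} is just a one-line algebraic check through the orthogonal decomposition $\dom{\opA^*}=\ker \opB^* \oplus (\ker \opB^*)^{\perp}$: writing $z = z_1 + z_2$ with $z_1 \in \ker \opB^*$ and $z_2 = \bmu \opB^* z \in (\ker \opB^*)^{\perp}$, the hypothesis \eqref{equ kerb} kills the $z_1$-contribution to $A_0^*P^*z - P^*\opA^*z$, leaving $(A_0^*P^* - P^*\opA^*)\bmu\opB^*z$, which by the very definition of $\Psi$ equals $P^*\Psi\opB^*z = P^*\Gamma^*\opB^*z$.

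The only non-automatic point, and the one I expect to be the main (albeit mild) obstacle, is justifying that $\Psi$ is a well-defined bounded map from $\C$ to $L^2(0,L)$, so that $\Gamma = \Psi^*$ is indeed a bounded linear form on $L^2(0,L)$. For this I would chain four ingredients: first, $\bmu$ is bounded from $\C$ into $\dom{\opA^*}$, by the open mapping theorem applied to the continuous bijection $\opB^*|_{(\ker \opB^*)^{\perp}} \colon (\ker \opB^*)^{\perp} \to \C$; second, assumption \eqref{P regul} together with the closed graph theorem (using continuity of $P^*$ on $L^2(0,L)$ and closedness of $\opA^*$) shows that $P^*$ restricts to a bounded operator on $\dom{\opA^*}$ endowed with the graph norm, hence $A_0^*P^* \colon \dom{\opA^*} \to L^2(0,L)$ is bounded; third, $\opA^*$ is tautologically bounded from $\dom{\opA^*}$ to $L^2(0,L)$; and fourth, $(P^*)^{-1}$ is bounded on $L^2(0,L)$ because $P$, and hence $P^*$, is invertible. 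Composition of these four boundednesses yields that $\Psi$ is bounded, which finishes the plan.
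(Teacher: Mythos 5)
Your proposal is correct and follows essentially the same route as the paper: the paper's own justification is precisely the orthogonal decomposition $\dom{\opA^*}=\ker \opB^* \oplus \left(\ker \opB^*\right)^{\perp}$, the equivalence of \eqref{rela fonda-explicit} with \eqref{equ kerb} and \eqref{equ kerb ort}, and the definition of $\Gamma$ as the adjoint of $\Psi$ in \eqref{def gamma abstr}, with boundedness of $P^*$ on $\dom{\opA^*}$ obtained by the closed graph theorem exactly as you do. The only remark worth making is that boundedness of $\Psi$ is in fact automatic once it is well defined, since its domain $\C$ is finite-dimensional, so the chain of boundedness arguments (harmless and also present in the paper) is not strictly needed.
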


A discussion on other expressions of $\Gamma$ than \eqref{def gamma abstr} is given in Section \ref{sect gamma} below.

\subsection{Construction of the transformation}

In this section, we are going to construct a map $P$ such that \eqref{P regul} and \eqref{equ kerb} hold.
We look for $P$ in the form
\begin{equation}\label{P}
P=\Id-\K,
\end{equation}
where $\K:L^2(0,L) \longrightarrow L^2(0,L)$ is an integral operator defined by
$$\K z(x) = \int_0^L \k(x,y)z(y) \, dy,$$
with $\k \in L^2((0,L)\times(0,L))$.
Clearly, its adjoint is
$$\K^*z(x)=\int_0^L \k^*(x,y) z(y) \, dy,$$
where we set
$$\k^*(x,y) = \ext{\k(y,x)}.$$
Let us recall that $\K$, as well as $\K^*$, is compact on $L^2(0,L)$.

For the expression \eqref{P}, \eqref{P regul} now read as
\begin{equation}\label{H regul}
\K^*\left(\dom{\opA^*}\right) \subset \dom{\opA^*},
\end{equation}
and \eqref{equ kerb} becomes
\begin{equation}\label{rela fonda 2}
-A_0^*\K^*z+\K^*A_0^*z+\K^*\G^*z-\G^*z=0, \quad \forall z \in \ker \opB^*.
\end{equation}
Let us now translate these properties in terms of the kernel $\k^*$.

\begin{proposition}\label{prop equiv H h}
Assume that
\begin{equation}\label{reg hstar}
\begin{array}{c}
\ds \k^* \in H^1(\Tau_-) \cap H^1(\Tau_+),
\end{array}
\end{equation}
and let $\k^*_+ \in L^2(\partial\Tau_+)$ be the trace on $\Tau_+$ of the restriction of $\k^*$ to $\Tau_+$.
Then,
\begin{enumerate}
\item
\eqref{H regul} holds if, and only if,
\begin{equation}\label{H regul traduit}
\k^*_+(0,y)=0, \quad y \in (0,L).
\end{equation}
\item
\eqref{rela fonda 2} holds if, and only if,
\begin{equation}\label{equ hstar}
\k^*_x(x,y)+\ds \k^*_y(x,y) +\int_0^L \ext{\g(y,\sigma)} \k^*(x,\sigma)d\sigma-\ext{\g(y,x)}=0, \quad x,y \in (0,L).
\end{equation}
\end{enumerate}
\end{proposition}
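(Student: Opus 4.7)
My plan is to establish both equivalences by turning the abstract relations into pointwise identities on the square $(0,L)\times(0,L)$, using the regularity \eqref{reg hstar} and the fact that $k^*$ has only a jump across the diagonal. Under \eqref{reg hstar} I can split
$$K^* z(x) = \int_0^x k^*_-(x,y) z(y)\,dy + \int_x^L k^*_+(x,y) z(y)\,dy,$$
where $k^*_{\pm}$ denote the restrictions of $k^*$ to $\Tau_{\pm}$. Invoking the trace/regularity results collected in \ref{app 1}, $K^*z\in H^1(0,L)$ for every $z\in L^2(0,L)$ and
$$(K^* z)_x(x) = \bigl[k^*_-(x,x)-k^*_+(x,x)\bigr] z(x) + \int_0^L k^*_x(x,y) z(y)\,dy,$$
where $k^*_x$ is understood piecewise on each triangle.

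For part (i), since $\dom{\opA^*}=\ens{z\in H^1(0,L) : z(0)=0}=\dom{\opA_0^*}$, the inclusion $K^*(\dom{\opA^*})\subset \dom{\opA^*}$ reduces to $(K^* z)(0)=0$ for every $z\in\dom{\opA^*}$. Because $\{0\}\times(0,L)\subset\partial \Tau_+$, the boundary value is $(K^*z)(0)=\int_0^L k^*_+(0,y) z(y)\,dy$. Density of $\dom{\opA^*}$ in $L^2(0,L)$ then gives the equivalence with $k^*_+(0,\cdot)=0$.

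For part (ii), I plug the formula for $(K^*z)_x$ into $-\opA_0^*K^*z=(K^*z)_x$ and I compute $K^*\opA_0^*z$ by integrating by parts separately on $(0,x)$ and $(x,L)$. For $z\in\ker\opB^*=H^1_0(0,L)$ the boundary contributions at $y=0$ and $y=L$ vanish, while the one-sided limits at $y=x$ give
$$K^*\opA_0^* z(x) = \bigl[k^*_+(x,x)-k^*_-(x,x)\bigr] z(x) + \int_0^L k^*_y(x,y) z(y)\,dy.$$
The key cancellation is that the jump terms $[k^*_--k^*_+](x,x) z(x)$ coming from $-\opA_0^*K^*z$ and from $K^*\opA_0^*z$ are exact opposites, so
$$-\opA_0^*K^*z(x)+K^*\opA_0^*z(x) = \int_0^L \bigl[k^*_x(x,y)+k^*_y(x,y)\bigr] z(y)\,dy.$$
A Fubini rearrangement in $K^*\G^*z-\G^*z$ recasts it as $\int_0^L z(y)\left[\int_0^L \ext{\g(y,\sigma)} k^*(x,\sigma)\,d\sigma - \ext{\g(y,x)}\right]dy$. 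Hence \eqref{rela fonda 2} becomes the vanishing of
$$\int_0^L z(y)\left[k^*_x(x,y)+k^*_y(x,y)+\int_0^L \ext{\g(y,\sigma)} k^*(x,\sigma)\,d\sigma-\ext{\g(y,x)}\right]dy$$
for a.e.\ $x\in(0,L)$ and every $z\in H^1_0(0,L)$. Since $H^1_0(0,L)$ is dense in $L^2(0,L)$, this is equivalent to \eqref{equ hstar} holding a.e.\ on each of $\Tau_-$ and $\Tau_+$.

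The main obstacle is the careful handling of the diagonal jump: I need the piecewise regularity from \ref{app 1} to justify both differentiating $K^*z$ under the integral and the two integration by parts, and above all to identify the one-sided traces $k^*_\pm(x,x)$ precisely enough that the jump contributions cancel. Everything else (Fubini for the $\G^*$ terms, density arguments to pass from the dense test space $\ker\opB^*$ to a pointwise PDE) is standard once that step is secured.
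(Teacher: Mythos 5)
Your proposal is correct and follows essentially the same route as the paper: the trace formula $\K^*z(0)=\int_0^L \k^*_+(0,y)z(y)\,dy$ plus density for part (i), and for part (ii) the piecewise differentiation under the integral, integration by parts of $\K^*\opA_0^*z$ with cancellation of the diagonal jump terms, a Fubini rearrangement of the $\G^*$ terms, and density of $H^1_0(0,L)$ in $L^2(0,L)$. The only cosmetic difference is that the paper carries the boundary term $-\k^*_+(x,L)z(L)$ for general $z\in \dom{\opA^*}$ (reusing it later to compute $\Gamma$) whereas you drop it immediately by working with $z\in H^1_0(0,L)$.
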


Observe that if $\k^* \in H^1(\Tau_-) \cap H^1(\Tau_+)$, then $\k^*_x, \k^*_y\in L^2((0,L)\times(0,L))$ and \eqref{equ hstar} is understood as an equality for almost every $(x,y) \in (0,L)\times(0,L)$.

\begin{proof}
Let us first prove the equivalence between \eqref{H regul} and \eqref{H regul traduit}.
Since $\k^* \in H^1(\Tau_-) \cap H^1(\Tau_+)$, we have $\K^*z \in H^1(0,L)$ for every $z \in L^2(0,L)$ with (see Proposition \ref{prop ipp} \ref{hyp A3})
$$\K^*z(0)=\int_0^L \k^*_+(0,y) z(y) \, dy.$$
Thus,  \eqref{H regul} holds if, and only if, $\K^*z(0)=0$ for every $z \in \dom{\opA^*}$, which gives \eqref{H regul traduit} by density of $\dom{\opA^*}$ in $L^2(0,L)$.

Let us now establish the equivalence between \eqref{rela fonda 2} and \eqref{equ hstar}.
Let us compute each terms in the left-hand side of \eqref{rela fonda 2} for any $z \in \dom{\opA^*}$.
For the first term we have (see Proposition \ref{prop ipp} \ref{hyp A3})
$$
\begin{array}{rl}
-A_0^*\K^*z(x) &\ds =
\partial_x\left(\int_0^L \k^*(\cdot,y)z(y) \, dy\right)(x) \\
&\ds =\Big(\k_-^*(x,x)-\k_+^*(x,x)\Big)z(x)
+\int_0^L \k^*_x(x,y)z(y) \, dy,
\end{array}
$$
where $\k^*_- \in L^2(\partial\Tau_-)$ (\textit{resp.} $\k^*_+ \in L^2(\partial\Tau_+)$) denotes the trace on $\Tau_-$ (\textit{resp.} $\Tau_+$) of the restriction of $\k^*$ to $\Tau_-$ (\textit{resp.} $\Tau_+$).
On the other hand, integrating by parts the second term and using $z(0)=0$ (since $z \in \dom{\opA^*}$), we have (see Proposition \ref{prop ipp}  \ref{hyp A2})
$$
\begin{array}{rl}
\K^*A_0^*z(x) &\ds =
-\int_0^L \k^*(x,y)z'(y) \, dy \\
&\ds =\int_0^L \k^*_y(x,y)z(y) \, dy
-\Big(\k_-^*(x,x)-\k_+^*(x,x)\Big)z(x)
-\k^*_+(x,L)z(L).
\end{array}
$$
Finally, the remaining term gives
$$
\begin{array}{rl}
\K^*\G^*z(x)-\G^*z(x) &\ds =
\int_0^L \k^*(x,y)\left(\int_0^L \ext{\g(\sigma,y)} z(\sigma) \, d\sigma\right) \, dy
-\int_0^L \ext{\g(y,x)} z(y) \, dy
\\
&\ds =\int_0^L \left(\int_0^L \k^*(x,\sigma)\ext{\g(y,\sigma)} \, d\sigma
-\ext{\g(y,x)}\right)z(y) \, dy.
\end{array}
$$
As a result, summing all the previous equalities, we have
\begin{multline}\label{equ K k}
-A_0^*\K^*z(x)+\K^*\opA_0^*z(x)+\K^*\G^*z(x)-\G^*z(x)
= \\
\int_0^L \left(\k^*_x(x,y)+\k^*_y(x,y)+\int_0^L \k^*(x,\sigma)\ext{\g(y,\sigma)} \, d\sigma
-\ext{\g(y,x)}\right)z(y) \, dy 
-\k^*_+(x,L)z(L),
\end{multline}
for every $z \in \dom{\opA^*}$.
In particular, we obtain that \eqref{rela fonda 2} is equivalent to
$$
\int_0^L \left(\k^*_x(x,y)+\k^*_y(x,y)+\int_0^L \k^*(x,\sigma)\ext{\g(y,\sigma)} \, d\sigma
-\ext{\g(y,x)}\right)z(y) \, dy=0,
$$
for every $z \in \ker \opB^*=H^1_0(0,L)$.
Since $H^1_0(0,L)$ is dense in $L^2(0,L)$, this is equivalent to the equation \eqref{equ hstar}.
\end{proof}

\begin{remark}\label{rem H regul}
In the first step of the proof we have in fact establish that \eqref{H regul traduit} is equivalent to
\begin{equation}\label{H regul 2}
\K^*\left(L^2(0,L)\right) \subset \dom{\opA^*}.
\end{equation}
We see that the operator $\K^*$ has a regularizing effect (under assumption \eqref{reg hstar}).
\end{remark}

\subsubsection{Existence of the kernel}

Viewing $x$ as the time parameter in \eqref{H regul traduit}-\eqref{equ hstar}, it is clear that these equations have at least one solution $\k^* \in C^0([0,L];L^2(0,L))$, if we add any artificial $L^2$ boundary condition at $(x,0)$.
In this section, we fix a particular boundary condition such that $\k^*$ satisfies, in addition, the final condition
\begin{equation}\label{hstar final}
\k^*(L,y)=0, \quad y \in (0,L).
\end{equation}
This property will be used to establish the invertibility of the Fredholm transformation associated with this $\k^*$, see Section \ref{sect inv fred} below.

\begin{proposition}\label{prop exist k}
Assume that \eqref{syst init} is exactly controllable at time $L$.
Then, there exists $U \in L^2(0,L)$ such that the solution $\k^* \in C^0([0,L];L^2(0,L))$ to
\begin{align}\label{equ hstar bis}
\begin{cases}
\k^*_x(x,y)+\ds \k^*_y(x,y)+\int_0^L \ext{\g(y,\sigma)} \k^*(x,\sigma)d\sigma-\ext{\g(y,x)}=0, \quad x,y \in (0,L),\\
\ds \k^*(x,L)=U(x), \quad x \in (0,L),\\
\ds \k^*(L,y)=0, \quad y \in (0,L),
\end{cases}
\end{align}
satisfies
\begin{equation}\label{hstar init}
\k^*(0,y)=0, \quad y \in (0,L).
\end{equation}
\end{proposition}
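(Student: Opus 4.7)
The idea is to recast \eqref{equ hstar bis}--\eqref{hstar init} as a boundary control problem for an equation of the same form as \eqref{syst init} (after a change of variables and complex conjugation), and then to invoke the exact controllability assumption.

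First I would perform the time reversal $t = L-x$ and set $\phi(t, y) := \k^*(L-t, y)$, $V(t) := U(L-t)$. Then the first equation of \eqref{equ hstar bis} becomes
\begin{equation*}
\phi_t(t,y) = \phi_y(t,y) + \int_0^L \overline{g(y,\sigma)}\,\phi(t,\sigma)\,d\sigma - \overline{g(y,L-t)}, \quad t,y \in (0,L),
\end{equation*}
with boundary condition $\phi(t,L) = V(t)$ and initial condition $\phi(0,\cdot) = 0$ (which encodes $\k^*(L,\cdot)=0$), while the target property \eqref{hstar init} reads $\phi(L,\cdot)=0$.

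Next, by linearity I would split $\phi = \phi_s + \phi_V$, where $\phi_s$ solves the above inhomogeneous equation with $V=0$ and $\phi_V$ solves the homogeneous analog (no source) with boundary data $V$, both starting from $0$. The homogeneous equation for $\phi_V$ is precisely \eqref{syst init} with $g$ replaced by $\overline{g}$. Since the source $(t,y) \mapsto -\overline{g(y,L-t)}$ belongs to $L^2((0,L)\times(0,L))$, standard semigroup theory (together with the $C_0$-group generation and the admissibility condition \eqref{admi} recalled in the introduction, applied to the $\overline g$-analog, whose operator differs from $\opA$ only by the replacement $g \mapsto \overline g$) gives $\phi_s,\phi_V \in C^0([0,L]; L^2(0,L))$. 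In particular $\phi_s(L) \in L^2(0,L)$, and the question reduces to finding $V \in L^2(0,L)$ such that $\phi_V(L) = -\phi_s(L)$.

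Finally, observe that the map $u \mapsto \overline u$ is a conjugate-linear bijection of $L^2(0,L)$ that sends a solution of \eqref{syst init} with control $U$ and data $u^0,u^1$ to a solution of its $\overline g$-analog with control $\overline U$ and data $\overline{u^0},\overline{u^1}$. Hence the exact controllability of \eqref{syst init} at time $L$ transfers verbatim to the $\overline g$-system, which therefore can be steered from $0$ to any target in $L^2(0,L)$ at time $L$. Applying this to the target $-\phi_s(L)$ produces the required $V$; then $U(x) := V(L-x)$ and $\k^*(x,y) := \phi(L-x,y)$ solve \eqref{equ hstar bis} and satisfy \eqref{hstar init}. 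I expect the main obstacle to be essentially bookkeeping: identifying the time-reversed evolution as a controlled $\overline g$-analog of \eqref{syst init} and justifying $\phi_s(L) \in L^2(0,L)$ by Duhamel's formula; once this is in hand, the transfer of exact controllability by conjugation is immediate.
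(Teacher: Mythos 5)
Your proposal is correct and follows essentially the same route as the paper: reverse the $x$-variable so that \eqref{equ hstar bis} becomes a boundary control problem from data $0$ at time $0$ to target $0$ at time $L$, split the solution into a free inhomogeneous part plus a controlled homogeneous part, and invoke exact controllability at time $L$ to steer the latter to minus the final state of the former. The only cosmetic difference is that the paper also conjugates, setting $\widetilde{k}(t,y)=\overline{k^*(L-t,y)}$ so that the controlled system is literally \eqref{syst init}, whereas you keep $\phi=k^*(L-t,\cdot)$ and transfer controllability to the $\overline{g}$-system via the conjugation symmetry $u\mapsto\overline{u}$; both are equally valid.
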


\begin{proof}
Since $x$ plays the role of the time, let us introduce
$$\tih(t,y)=\ext{\k^*(L-t,y)}.$$
Thus, we want to prove that there exists $\tiU \in L^2(0,L)$ such that the corresponding solution $\tih \in C^0([0,L];L^2(0,L))$ to
\begin{align}\label{equ tih}
\begin{cases}
\ds \tih_t(t,y)-\tih_y(t,y)=\int_0^L \g(y,\sigma) \tih(t,\sigma)d\sigma-\g(y,L-t), \quad t,y \in (0,L),\\
\ds \tih(t,L)=\tiU(t), \quad t \in (0,L), \\
\ds \tih(0,y)=0, \quad y \in (0,L),
\end{cases}
\end{align}
satisfies
\begin{equation}\label{tihfinal}
\tih(L,y)=0, \quad y \in (0,L).
\end{equation}
This is a control problem, which has a solution by assumption.
Indeed, let $p \in C^0([0,L];L^2(0,L))$ be the free solution to the nonhomogeneous equation
\begin{align*}
\begin{cases}
\ds p_t(t,y)-p_y(t,y)=\int_0^L \g(y,\sigma) p(t,\sigma)d\sigma-\g(y,L-t), \quad t,y \in (0,L),\\
\ds p(t,L)=0, \quad t \in (0,L), \\
\ds p(0,y)=0, \quad y \in (0,L),
\end{cases}
\end{align*}
and let $q \in C^0([0,L];L^2(0,L))$ be the controlled solution going from $0$ to $-p(L,\cdot)$:
\begin{align*}
\begin{cases}
\ds q_t(t,y)-q_y(t,y)=\int_0^L \g(y,\sigma) q(t,\sigma)d\sigma, \quad t,y \in (0,L),\\
\ds q(t,L)=\tiU(t), \quad t \in (0,L), \\
\ds q(0,y)=0, \quad q(L,y)=-p(L,y), \quad y \in (0,L).
\end{cases}
\end{align*}
Then, the function $\tih \in C^0([0,L];L^2(0,L))$ defined by
$$\tih=p+q,$$
satisfies \eqref{equ tih}-\eqref{tihfinal}.
\end{proof}

\subsubsection{Regularity of the kernel}

The next step is to establish the regularity \eqref{reg hstar} for $\k^*$ provided by Proposition \ref{prop exist k}.

\begin{proposition}\label{prop reg h}
Let $U \in L^2(0,L)$ and let $\k^* \in C^0([0,L];L^2(0,L))$ be the corresponding solution to \eqref{equ hstar bis}.
If $\k^*$ satisfies \eqref{hstar init} and \eqref{reg G} holds, then
$$U \in H^1(0,L), \quad \k^* \in H^1(\Tau_-) \cap H^1(\Tau_+).$$
\end{proposition}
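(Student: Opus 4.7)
My plan is to exploit the transport character of equation \eqref{equ hstar bis}: the principal part $\partial_x + \partial_y$ has characteristics $y - x = c$, each contained in a single triangle $\Tau_+$ or $\Tau_-$. Setting
\[
f(x, y) \eqdef \overline{\g(y, x)} - \int_0^L \overline{\g(y, \sigma)}\, \k^*(x, \sigma)\,d\sigma \in L^2((0,L)\times(0,L)),
\]
equation \eqref{equ hstar bis} reads $(\partial_x + \partial_y) \k^* = f$. Integrating along characteristics and using \eqref{hstar init} in $\Tau_+$ and $\k^*(L,\cdot)=0$ in $\Tau_-$ yields the explicit representations
\[
\k^*(x, y) = \int_0^x f(s, s + y - x)\,ds \text{ on } \Tau_+,
\quad
\k^*(x, y) = -\int_x^L f(s, s + y - x)\,ds \text{ on } \Tau_-.
\]

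The crux of the argument is the a priori bound $\sup_{y \in (0, L)} \|\k^*(\cdot, y)\|_{L^2(0,L)}^2 \le L \|f\|_{L^2}^2$, obtained by squaring the above representations, applying Cauchy-Schwarz, and performing the change of variable $\tau = s + y - x$ followed by Fubini. Thus $y \mapsto \|\k^*(\cdot, y)\|_{L^2}$ lies in $L^\infty(0, L)$, not merely in $L^2$. A direct distributional computation, splitting the inner integral at $\sigma = y$ to handle the jump of $\g$ across the diagonal, then gives
\[
\partial_y f(x, y) = \overline{\partial_1 \g(y, x)} + J(y)\, \k^*(x, y) - \int_0^L \overline{\partial_1 \g(y, \sigma)}\, \k^*(x, \sigma)\,d\sigma,
\]
where $J(y) \eqdef \overline{\g_+(y, y)} - \overline{\g_-(y, y)}$, $\g_\pm \eqdef \g|_{\Tau_\pm}$, and $\partial_1 \g$ denotes the first-variable derivative within each triangle. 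By \eqref{reg G}, the first term is in $L^2(\Tau_\pm)$ via the swap $(x,y)\leftrightarrow(y,x)$, the third by Cauchy-Schwarz (using the $L^2(\Tau_\pm)$ bound on $\partial_1\g$), and the jump term by the $L^\infty_y L^2_x$ bound above combined with $J \in L^2(0, L)$ (the one-dimensional trace of $\g$ on the diagonal, provided by the appendix).

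Differentiating the $\Tau_+$ representation of $\k^*$ then gives $\k^*_y(x,y) = \int_0^x \partial_y f(s, s + y - x)\,ds$ and $\k^*_x = f - \k^*_y$; the same change-of-variable and Fubini argument shows each belongs to $L^2(\Tau_+)$, and symmetrically for $\Tau_-$, so $\k^* \in H^1(\Tau_-) \cap H^1(\Tau_+)$. For $U$, setting $y = L$ in the $\Tau_+$ formula gives $U(x) = \int_0^x f(s, s + L - x)\,ds$; differentiation yields $U'(x) = f(x, L) - \int_0^x \partial_y f(s, s + L - x)\,ds$, where $f(\cdot, L) \in L^2(0, L)$ by the trace results of the appendix applied to $\overline{\g(L,\cdot)}$ and by Cauchy-Schwarz for the integral part, while the remaining integral is controlled as above. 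The main obstacle is the jump contribution $J(y)\,\k^*(x,y)$ in $\partial_y f$: since $J$ only lies in $L^2(0,L)$ (no embedding into $L^\infty$), a bare $\k^* \in L^2$ bound would be insufficient, and the $L^\infty_y L^2_x$ estimate coming from the two homogeneous boundary conditions $\k^*(0,\cdot)=0$ and $\k^*(L,\cdot)=0$ is what saves the argument.
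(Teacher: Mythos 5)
Your argument is correct and follows the same route as the paper: integrate \eqref{equ hstar bis} along the characteristics of $\partial_x+\partial_y$ (this is the content of the paper's Lemma \ref{lem formul}), reduce the whole question to showing that the $y$-derivative of the source term lies in $L^2(\Tau_-)$ and $L^2(\Tau_+)$, and then read off $\k^*\in H^1(\Tau_\pm)$ and $U\in H^1(0,L)$ from the representation formulas. Two differences are worth recording. First, you anchor the $\Tau_+$ representation at $x=0$ via \eqref{hstar init}, whereas the paper anchors it at $y=L$ (so that $U$ appears explicitly in the formula) and recovers $U\in H^1(0,L)$ from Lemma \ref{lem formul} \ref{hyp 03}; the two variants are equivalent. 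Second, and more substantively, you explicitly prove the a priori bound $\sup_y\norm{\k^*(\cdot,y)}_{L^2_x}^2\leq L\norm{f}_{L^2}^2$ in order to show that the jump contribution $\left(\ext{\g_+(y,y)}-\ext{\g_-(y,y)}\right)\k^*(x,y)$ in $\partial_yf$ is square integrable over the square. The paper asserts $(f_1)_y\in L^2(\Tau_\pm)$ without comment, yet the hypothesis $\k^*\in C^0([0,L];L^2(0,L))$ (continuity in $x$ with values in $L^2_y$) is not by itself enough for this term, since the diagonal trace of $\g$ is only known to lie in $L^2$ (indeed $H^{1/2}$), not $L^\infty$; your $L^\infty_yL^2_x$ estimate, extracted from the representation formula and the two homogeneous conditions $\k^*(0,\cdot)=0$ and $\k^*(L,\cdot)=0$, is precisely what closes this step. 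In this respect your write-up is more complete than the paper's.
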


The proof of Proposition \ref{prop reg h} relies on the following lemma:
\begin{lemma}\label{lem formul}
Let $f \in L^2((0,L)\times(0,L))$, $V \in L^2(0,L)$ and $v^0 \in L^2(0,L)$.
\begin{enumerate}
\item
The unique solution $v \in C^0([0,L];L^2(0,L))$ to
\begin{equation}\label{equ v}
\begin{cases}
\ds v_x(x,y)+v_y(x,y)=f(x,y), \quad x,y \in (0,L),\\
\ds v(x,L)=V(x), \quad x \in (0,L), \\
\ds v(L,y)=v^0(y), \quad y \in (0,L),
\end{cases}
\end{equation}
is given by
$$
v(x,y)=
\left\{\begin{array}{ll}
\ds V(L+x-y)-\int_x^{L+x-y} f(s,s+y-x) \, ds, & \mbox{ if } (x,y) \in \Tau_+, \\
\ds v^0(L+y-x)-\int_x^{L} f(s,s+y-x) \, ds, & \mbox{ if } (x,y) \in \Tau_-.
\end{array}\right.
$$

\item\label{hyp 02}
If $V \in H^1(0,L)$ (\textit{resp.} $v^0 \in H^1(0,L)$) and $f_y \in L^2(\Tau_+)$ (\textit{resp.} $f_y \in L^2(\Tau_-)$), then $v \in H^1(\Tau_+)$ (\textit{resp.} $v \in H^1(\Tau_-)$).

\item\label{hyp 03}
If $f_y \in L^2(\Tau_+)$ and $v(0,\cdot) \in H^1(0,L)$, then $V \in H^1(0,L)$.
\end{enumerate}
\end{lemma}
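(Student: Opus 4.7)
My plan is to solve the transport equation by characteristics and then derive both regularity statements from direct estimates on the explicit formula. For part (i), the characteristics of $v_x+v_y = f$ are the lines $y-x=\text{const}$, along which $\frac{d}{ds}v(s,s+c)=f(s,s+c)$. For $(x,y)\in\Tau_+$ the characteristic through $(x,y)$ exits the square $(0,L)^2$ through $\{y=L\}$ at $s=L+x-y$; integrating the ODE backward from the datum $V(L+x-y)$ yields the first branch of the formula. The case $(x,y)\in\Tau_-$ is analogous, the characteristic exiting through $\{x=L\}$ at $s=L$ with value $v^0(L+y-x)$.

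For part (ii), I would differentiate the formula in $\Tau_+$ (the $\Tau_-$ case being symmetric). With $\alpha=L+x-y$, the Leibniz rule gives
\begin{equation*}
v_y(x,y) = -V'(\alpha)+f(\alpha,L)-\int_x^\alpha f_y(s,s+y-x)\,ds,
\end{equation*}
and it suffices to show $v_y\in L^2(\Tau_+)$, since the equation then gives $v_x=f-v_y\in L^2(\Tau_+)$ and hence $v\in H^1(\Tau_+)$. Each term is controlled by a change of variables. The $V'$-term, via $\alpha=L+x-y$, reduces to $\int_0^L\alpha|V'(\alpha)|^2\,d\alpha\le L\|V'\|_{L^2}^2$. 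The integral term, after Cauchy--Schwarz in $s$, is handled by the measure-preserving substitution $(x,y,s)\mapsto(\sigma,\tau,y)$ with $\sigma=s$, $\tau=s+y-x$, which produces a bound $\le L\|f_y\|_{L^2(\Tau_+)}^2$. The trace $f(\cdot,L)$ is well-defined slice-by-slice because $f_y\in L^2(\Tau_+)$ yields $f(x,\cdot)\in H^1(x,L)$ for a.e.\ $x$, and the corresponding 1D trace inequality combined with the Jacobian weight $\alpha$ furnishes the required $L^2$-control.

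For part (iii), I would specialize the formula at $x=0$: $V(L-y)=v(0,y)+\int_0^{L-y}f(s,s+y)\,ds$. With $\beta=L-y$ this reads $V(\beta)=v(0,L-\beta)+W(\beta)$, where $W(\beta)=\int_0^\beta f(s,s+L-\beta)\,ds$. The first summand is $H^1$ by hypothesis, and $W'(\beta)=f(\beta,L)-\int_0^\beta f_y(s,s+L-\beta)\,ds$, with both pieces in $L^2(0,L)$ by arguments parallel to those in part (ii). The main technical obstacle throughout is the trace $f(\cdot,L)$, which does not come for free from $f\in L^2((0,L)\times(0,L))$ alone and must be extracted from the slice-wise $H^1$-regularity provided by $f_y\in L^2(\Tau_+)$, with careful accounting of the weights that arise from the successive changes of variable.
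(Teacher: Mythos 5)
The paper states Lemma~\ref{lem formul} without proof (the proof environment that follows it is the proof of Proposition~\ref{prop reg h}, which \emph{uses} the lemma), so there is nothing to compare against; your route --- characteristics for (i), then differentiation of the explicit formula and changes of variables for (ii)--(iii) --- is certainly the intended one. Part (i) is correct, and so are the $v^0$/$\Tau_-$ half of (ii) (where no boundary term arises, since the upper limit $L$ of the integral does not depend on $y$) and your treatment of the $V'$- and $\int f_y$-terms.

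The genuine gap is exactly the point you flag as ``the main technical obstacle'': the trace term $f(\alpha,L)$, $\alpha=L+x-y$. The claim that slice-wise $H^1$ regularity plus the Jacobian weight $\alpha$ yields $L^2$ control is false. One needs $\int_0^L \alpha\,\abs{f(\alpha,L)}^2\,d\alpha<+\infty$, but the one-dimensional trace inequality on the slice $(\alpha,L)$ carries a factor $(L-\alpha)^{-1}$ in front of $\norm{f(\alpha,\cdot)}_{L^2(\alpha,L)}^2$, and the weight $\alpha$ is of order $L$ (not of order $L-\alpha$) precisely where the slices degenerate, i.e.\ near the corner $(L,L)$. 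In fact no argument can close this step under the stated hypotheses, because the statement itself then fails: take $V=0$, $v^0=0$, $f=(L-x)^{-3/4}$ on $\Tau_+$ and $f=0$ on $\Tau_-$. Then $f\in L^2((0,L)\times(0,L))$ and $f_y=0$ on $\Tau_+$, yet the formula of (i) gives $v(x,y)=-4(L-x)^{1/4}+4(y-x)^{1/4}$ on $\Tau_+$, so $v_y=(y-x)^{-3/4}\notin L^2(\Tau_+)$ and (ii) fails; choosing instead $V(\beta)=4L^{1/4}-4(L-\beta)^{1/4}$ gives $v(0,\cdot)=0\in H^1(0,L)$ while $V\notin H^1(0,L)$, so (iii) fails too. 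The lemma (and your estimates, which then go through verbatim) requires the supplementary hypothesis that the trace $f_+(\cdot,L)$ of $f_{|\Tau_+}$ on $\ens{y=L}$ belongs to $L^2(0,L)$. This is harmless for the paper's only application, Proposition~\ref{prop reg h}: there $f_2(\alpha,L)=\ext{\g_-(L,\alpha)}$ is the $L^2$ trace of $\g\in H^1(\Tau_-)$ on a full-length edge, and $f_1(\alpha,L)$ is controlled by $\norm{\g_-(L,\cdot)}_{L^2}\norm{\k^*(\alpha,\cdot)}_{L^2}$; but it should be stated.
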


\begin{proof}
Let us apply Lemma \ref{lem formul} with $V=U \in L^2(0,L)$, $v^0=0$ and
$$
\begin{array}{c}
\ds f(x,y)=f_1(x,y)+f_2(x,y), \\
\ds f_1(x,y)=-\int_0^L \ext{\g(y,\sigma)} \k^*(x,\sigma)d\sigma, \quad
f_2(x,y)=\ext{\g(y,x)}.
\end{array}
$$
Since $\k^*,g \in L^2((0,L)\times(0,L))$, we have $f_1,f_2 \in L^2((0,L)\times(0,L))$.
By uniqueness, the corresponding solution $v$ to \eqref{equ v} is equal to $\k^*$.
Since $\g \in H^1(\Tau_-) \cap H^1(\Tau_+)$ by assumption \eqref{reg G}, we have $(f_2)_y \in L^2(\Tau_+)$ and $(f_2)_y \in L^2(\Tau_-)$ by definition.
On the other hand, for a.e. $x \in (0,L)$, the map $f_1(x):y\longmapsto f_1(x,y)$ belongs to $H^1(0,L)$ with derivative (see Proposition \ref{prop ipp}  \ref{hyp A3})
$$
f_1(x)'(y)=-\int_0^L \ext{\g_x(y,\sigma)}\k^*(x,\sigma) \, d\sigma
-\Big(\g_-(y,y)-\g_+(y,y)\Big)\k^*(x,y).
$$
This shows that $(f_1)_y \in L^2(\Tau_-)$ and $(f_1)_y \in L^2(\Tau_+)$ (see Proposition \ref{prop caract hut}).
Finally, since $\k^*$ satisfies $\k^*(0,y)=0$ for a.e. $y \in (0,L)$, by Lemma \ref{lem formul} \ref{hyp 03} we have $U \in H^1(0,L)$.
Then, it follows from Lemma \ref{lem formul} \ref{hyp 02} that $\k^* \in H^1(\Tau_-) \cap H^1(\Tau_+)$.
\end{proof}

\subsection{Invertibility of the transformation}\label{sect inv fred}

To conclude the whole proof of Theorem \ref{thm stab}, it only remains to establish the invertibility of the transformation $\Id-\K^*$ with $\k^*$ provided by Proposition \ref{prop exist k}.
Let us start with a general lemma on the injectivity of maps $P^*$ for $P$ satisfying \eqref{P regul}-\eqref{equ kerb}.

\begin{lemma}\label{prop inv}
Let $P:L^2(0,L) \longrightarrow L^2(0,L)$ be a bounded linear operator such that \eqref{P regul}-\eqref{equ kerb} hold.
Then, we have
$$\ker P^*=\ens{0},$$
if, and only if, the following four conditions hold:
\begin{enumerate}
\item\label{hyp 1}
$\ker P^* \subset  \dom{\opA^*}$.

\item\label{hyp 1b}
$\ker P^* \subset \ker \opB^*$.

\item\label{hyp 2}
$\dim \ker P^* <+\infty$.

\item\label{hyp 3}
$\ker(\lambda-\opA^*) \cap \ker \opB^*=\ens{0}$ for every $\lambda \in \C$.
\end{enumerate}
\end{lemma}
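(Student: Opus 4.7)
The plan is to treat the two implications separately. Both arguments will hinge on the same elementary observation: the operator $\opA_0^*$ has no eigenvalue. Indeed, the equation $-z_x=\lambda z$ with $z(0)=0$ admits only $z\equiv 0$, so $\ker(\lambda-\opA_0^*)=\ens{0}$ for every $\lambda\in\C$. Once this is recorded, the rest is a short algebraic manipulation using \eqref{equ kerb}.

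For the forward direction, assuming $\ker P^*=\ens{0}$, conditions \ref{hyp 1}, \ref{hyp 1b}, \ref{hyp 2} are immediate since $\ens{0}\subset\dom{\opA^*}\cap\ker\opB^*$ and has dimension zero. For \ref{hyp 3}, I would argue by contradiction: take a nonzero $z\in\ker(\lambda-\opA^*)\cap\ker\opB^*$. Because $z\in\ker\opB^*$ (and in particular $z\in\dom{\opA^*}$), I can feed $z$ into \eqref{equ kerb} to obtain
$$\opA_0^*P^*z=P^*\opA^*z=\lambda P^*z,$$
so $P^*z\in\ker(\lambda-\opA_0^*)=\ens{0}$. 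Then $z\in\ker P^*=\ens{0}$, a contradiction.

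For the reverse direction, assuming \ref{hyp 1}--\ref{hyp 3}, I would show that $\ker P^*$ is stable under $\opA^*$. For any $z\in\ker P^*$, conditions \ref{hyp 1} and \ref{hyp 1b} place $z$ in $\dom{\opA^*}\cap\ker\opB^*$, so \eqref{equ kerb} gives
$$P^*\opA^*z=\opA_0^*P^*z=0,$$
i.e.\ $\opA^*z\in\ker P^*$. Thus $\opA^*$ restricts to a linear endomorphism of the finite-dimensional space $\ker P^*$ (by \ref{hyp 2}). If $\ker P^*\neq\ens{0}$, this restriction has an eigenvector $z_0\neq 0$ with some eigenvalue $\lambda\in\C$. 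Since $z_0\in\ker P^*\subset\ker\opB^*$ by \ref{hyp 1b}, we obtain a nonzero element of $\ker(\lambda-\opA^*)\cap\ker\opB^*$, contradicting \ref{hyp 3}. Hence $\ker P^*=\ens{0}$.

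The only non-routine point is ensuring the iteration closes: after deducing $\opA^*z\in\ker P^*$, one immediately reapplies \ref{hyp 1} and \ref{hyp 1b} to the new element, which is why these two hypotheses are stated globally on $\ker P^*$ rather than assumed just at a single point. The argument is otherwise a direct unwinding of \eqref{equ kerb}, so I expect no real obstacle beyond isolating the spectral property of $\opA_0^*$.
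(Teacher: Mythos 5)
Your proof is correct and follows essentially the same route as the paper: the same stability-of-$\ker P^*$ argument for the hard implication and the same use of the injectivity of $\lambda-\opA_0^*$ (which the paper asserts and you justify by solving the ODE) for the converse. No issues.
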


\begin{proof}
Let us denote
$$N=\ker P^*.$$
Assume first that \ref{hyp 1}, \ref{hyp 1b}, \ref{hyp 2} and \ref{hyp 3} hold.
We want to prove that $N=\ens{0}$.
We argue by contradiction: assume that $N \neq \ens{0}$.
Let us prove that $N$ is stable by $\opA^*$.
By \ref{hyp 1} we have $N \subset \dom{\opA^*}$.
Let then $z \in N$ and let us show that $\opA^*z \in N$.
Since $N \subset \ker \opB^*$ by \ref{hyp 1b}, we can apply \eqref{equ kerb} to $z$ and obtain
$$P^*\opA^*z=A_0^*P^*z.$$
Since $z \in \ker P^*$ by definition, this gives
$$P^*\opA^*z=0,$$
and shows that $\opA^*z \in \ker P^*=N$.
Consequently, the restriction ${\opA^*}_{|N}$ of $\opA^*$ to $N$ is a linear operator from $N$ to $N$.
Since $N$ is finite dimensional by \ref{hyp 2} and $N \neq \ens{0}$, ${\opA^*}_{|N}$ has at least one eigenvalue $\lambda \in \C$.
Let $\xi \in N$ be a corresponding eigenfunction.
Thus,
$$\xi \in \ker(\lambda-\opA^*) \cap \ker \opB^*,$$
but
$$\xi \neq 0,$$
which is a contradiction with \ref{hyp 3}.
As a result, we must have $N=\ens{0}$.

Conversely, assume now that $\ker P^*=\ens{0}$.
It is clear that \ref{hyp 1}, \ref{hyp 1b} and \ref{hyp 2} hold.
Let $\lambda \in \C$ and $z \in \ker(\lambda-\opA^*) \cap \ker \opB^*$.
We want to prove that $z=0$.
By \eqref{equ kerb}, we have
$$A_0^*P^*z=\lambda P^*z,$$
that is
$$(\lambda- A_0^*)P^*z=0.$$
Since $\lambda-A_0^*$ (with domain $\dom{\opA_0^*}$) is injective and so is $P^*$ by assumption, this gives $z=0$.
\end{proof}

\begin{proposition}
Assume that \eqref{syst init} is exactly controllable at time $L$ and that \eqref{reg G} holds.
Then, the map $\Id-\K^*$, with $\k^*$ provided by Proposition \ref{prop exist k}, is invertible.
\end{proposition}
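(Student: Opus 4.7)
The plan is to apply Lemma \ref{prop inv} to $P = \Id - \K$, so $P^* = \Id - \K^*$, and then conclude invertibility from injectivity via the Fredholm alternative (which applies since $\K^*$ is compact on $L^2(0,L)$). We have already verified in the preceding subsections that $P$ satisfies \eqref{P regul}-\eqref{equ kerb}: the regularity \eqref{reg hstar} comes from Proposition \ref{prop reg h}, and \eqref{H regul traduit} and \eqref{equ hstar} (hence \eqref{H regul} and \eqref{equ kerb}, by Proposition \ref{prop equiv H h}) hold thanks to Proposition \ref{prop exist k}. So it suffices to check the four conditions of Lemma \ref{prop inv}.

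Condition \textit{iv} is the Fattorini criterion \eqref{ucp}, which is necessary for approximate controllability (Remark \ref{rem ac}) and therefore a consequence of the standing assumption that \eqref{syst init} is exactly controllable at time $L$. Condition \textit{iii} follows from compactness of $\K^*$: indeed, $\ker P^* = \ker(\Id - \K^*)$ is finite-dimensional by Riesz-Fredholm theory.

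For conditions \textit{i} and \textit{ii}, I would use the specific properties of the kernel $\k^*$ constructed in Proposition \ref{prop exist k}. Let $z \in \ker P^*$, so $z = \K^* z$. Since $\k^*$ satisfies \eqref{H regul traduit} (equivalently, \eqref{hstar init}) and the regularity \eqref{reg hstar}, Remark \ref{rem H regul} gives $\K^*(L^2(0,L)) \subset \dom{\opA^*}$; hence $z = \K^* z \in \dom{\opA^*}$, which is \textit{i}. Condition \textit{ii} is where the extra condition \eqref{hstar final} enters: using \eqref{opBstar}, we compute
\[
\opB^* z = z(L) = (\K^* z)(L) = \int_0^L \k^*(L,y) z(y) \, dy = 0,
\]
since $\k^*(L,\cdot) \equiv 0$ by \eqref{hstar final}. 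So $z \in \ker \opB^*$, yielding \textit{ii}.

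The substantive content of the argument is really Proposition \ref{prop exist k}: it is there that the exact controllability hypothesis is used to force the terminal condition $\k^*(L,\cdot) \equiv 0$, which is exactly what drives condition \textit{ii} above; once this is in hand, the present proposition is essentially bookkeeping. I do not anticipate a serious obstacle beyond assembling the pieces; the only subtlety is ensuring that the application of Remark \ref{rem H regul} is legitimate, which requires \eqref{reg hstar}, guaranteed by Proposition \ref{prop reg h} since the exact controllability assumption together with \eqref{reg G} produces a kernel of the required $H^1(\Tau_-) \cap H^1(\Tau_+)$ regularity.
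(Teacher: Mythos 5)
Your proposal is correct and follows essentially the same route as the paper: Fredholm alternative to reduce invertibility to injectivity and to get finite-dimensionality of the kernel, the Fattorini condition from the controllability hypothesis, Remark \ref{rem H regul} for the inclusion in $\dom{\opA^*}$, and the terminal condition \eqref{hstar final} for the inclusion in $\ker \opB^*$. No gaps.
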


\begin{proof}
Since $\K^*$ is a compact operator, by Fredholm alternative it is equivalent to prove that $\Id-\K^*$ is injective.
In addition, the Fredholm alternative also gives
$$\dim \ker(\Id-\K^*) <+\infty.$$
Since $\Id-\K^*$ satisfies \eqref{H regul}-\eqref{rela fonda 2}, by Lemma \ref{prop inv} it is then equivalent to establish that
$$\ker(\Id-\K^*) \subset  \dom{\opA^*}, \quad \ker(\Id-\K^*) \subset \ker \opB^*.$$
The first inclusion follows from Remark \ref{rem H regul} and the second inclusion follows from the fact that
$$\opB^*\K^*z=0, \quad \forall z \in L^2(0,L),$$
which is equivalent to the condition \eqref{hstar final}.
\end{proof}

\subsection{Feedback control law}\label{sect gamma}

The proof of Theorem \ref{thm stab} is by now complete but we want to give a more explicit formula for $\Gamma$.
We recall that its adjoint $\Gamma^*$ is given by (see \eqref{def gamma abstr})
$$\Gamma^*=(P^*)^{-1}\left(A_0^*P^*-P^*\opA^*\right)\bmu.$$
Actually, we already computed $A_0^*P^*z-P^*\opA^*z$ for any $z \in \dom{\opA^*}$ in \eqref{equ K k} and we obtained that
$$A_0^*P^*z-P^*\opA^*z=-\k^*_+(\cdot,L)z(L).$$
Thus,
$$P^*\Gamma^*a=-\k^*_+(\cdot,L)a, \quad a \in \C.$$
Computing the adjoints, we obtain
$$\Gamma u=-\int_0^L \k_-(L,x) P^{-1} u(x) \, dx, \quad u \in L^2(0,L).$$
It is interesting to see that the open loop control $U$ provided by Proposition \ref{prop exist k} defines the closed loop control $\Gamma$ (since $\k_-(L,x)=\ext{U(x)}$ for a.e. $x \in (0,L)$).

Let us now recall that $P$ is of the form $P=\Id-\K$ and that the inverse of such an operator is also of the form $\Id-\H$ (with $\H=-(\Id-\K)^{-1}\K$).
Moreover, since $\K$ is an integral operator so is $\H$, with kernel $\h(\cdot,y)=-(\Id-\K)^{-1}\k(\cdot,y)$.
We can check that $\h$ inherits the regularity of $\k$ and satisfies a similar equation:
\begin{align*}
\begin{cases}
\h_x(x,y)+\ds \h_y(x,y)-\int_0^L \g(\sigma,y) \h(x,\sigma)d\sigma+\g(x,y)=0, \quad x,y \in (0,L),\\
\ds \h(x,0)=0, \quad \h(x,L)=0, \quad x \in (0,L).
\end{cases}
\end{align*}
Finally, a simple computation shows that $\Gamma$ is given by
$$\Gamma u =\int_0^L \h_-(L,y)u(y) \, dy,$$
where $\h_- \in L^2(\partial\Tau_-)$ denotes the trace on $\Tau-$ of the restriction of $\h$ to $\Tau_-$.

\section{Controllability}\label{sect cont}

The aim of this section is to study the controllability properties of \eqref{syst init} at the optimal time $T=L$ to provide easily checkable conditions to apply Theorem \ref{thm stab}.
Let us first mention that the controllability of one-dimensional systems generated by $C_0$-groups has already been investigated in a series of papers \cite{JZ2001} and \cite{GX}.
However, all these papers do not really focus on the optimal time of controllability, which is crucial to apply our stabilization theorem.
Let us also point out that the method developped in \cite{LR} seems ineffective because of the integral term $\int_x^L \g(x,y)u(t,y) \, dy$ in \eqref{syst init}.
Finally, let us mention the result \cite[Theorem 2.6]{Meh} for the distributed controllability of compactly perturbated systems (the case of the optimal time can not be treated though).

In order to have a good spectral theory, we consider system \eqref{syst init} with periodic boundary conditions:

\begin{equation}\label{syst init p}
\left\{
\begin{array}{rll}
\ut_t(t,x) -\ut_x(t,x) =&\ds \int_0^L \g(x,y)\ut(t,y) \, dy, & t \in (0,T), \, x \in (0,L), \\
\ut(t,L)-\ut(t,0)=& \Ut(t), & t \in (0,T), \\
\ut(0,x) =& \uzt(x), & x \in (0,L),
\end{array}
\right.
\end{equation}
where $\uzt \in L^2(0,L)$ and $\Ut \in L^2(0,T)$.
In the abstract form, \eqref{syst init p} reads
$$
\left\{
\begin{array}{rll}
\ds \ddt \ut=& \opAt \ut + \opBt \Ut, & t \in (0,T), \\
\ut(0) =& \uzt, &
\end{array}
\right.
$$
where $\opAt$ is the operator $\opA$ (see \eqref{def opA}) but now with domain
$$\dom{\opAt} = \ens{\ut \in H^1(0,L) \quad \middle| \quad \ut(L)=\ut(0)},$$
and $\opBt$ is the operator $\opB$ (see \eqref{def opB}) but now considered as an operator of $\lin{\C,\dom{\opAt^*}'}$.
The adjoints of these operators also remain unchanged (see \eqref{opAstar} and \eqref{opBstar}), except for their domain:
$$\dom{\opAt^*}=\dom{\opAt}, \quad \opBt^* \in \lin{\dom{\opAt^*},\C}.$$
Once again, we can check that $\opAt$ generates a $C^0$-group ${(\opSt(t))}_{t \in \R}$ and $\opBt$ is admissible.
Thus, \eqref{syst init p} is well-posed, that is, for every $\uzt \in L^2(0,L)$ and every $\Ut \in L^2(0,T)$, there exists a unique solution $\ut \in C^0([0,T];L^2(0,L))$ to \eqref{syst init p} and, in addition, there exists $C>0$ (which does not depend on $\uzt$ nor $\Ut$) such that
\begin{equation}\label{continu}
\norm{\ut}_{C^0([0,T];L^2(0,L))} \leq C \left(\norm{\uzt}_{L^2(0,L)}+\norm{\Ut}_{L^2(0,T)}\right).
\end{equation}

The following proposition shows that it is indeed equivalent to consider \eqref{syst init p} or \eqref{syst init} from a controllability point of view.
\begin{proposition}\label{prop equiv}
\eqref{syst init} is exactly controllable at time $T$ if, and only if, \eqref{syst init p} is exactly controllable at time $T$.
\end{proposition}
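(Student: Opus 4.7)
The two systems share the same transport-plus-integral dynamics in the interior and differ only in the way the scalar boundary control enters. The natural strategy is to use the same function $u$ as a solution of both systems and to trade one boundary relation for the other via the identity
\begin{equation*}
\tilde{U}(t) \;=\; \tilde{u}(t,L)-\tilde{u}(t,0) \;=\; U(t)-u(t,0).
\end{equation*}
For this identification to convert $L^2$ controls into $L^2$ controls, I must show that the trace that is \emph{not} prescribed in each system actually has $L^2(0,T)$ regularity: namely, $u(t,0)\in L^2(0,T)$ for solutions of \eqref{syst init}, and $\tilde{u}(t,L)\in L^2(0,T)$ (equivalently $\tilde{u}(t,0)\in L^2(0,T)$) for solutions of \eqref{syst init p}.

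\textbf{Step 1 (Hidden regularity of the left/right trace).} First I would establish, for any $u^0\in L^2(0,L)$ and $U\in L^2(0,T)$, an estimate of the form
\begin{equation*}
\norm{u(\cdot,0)}_{L^2(0,T)}\le C\bigl(\norm{u^0}_{L^2(0,L)}+\norm{U}_{L^2(0,T)}\bigr),
\end{equation*}
and the analogous bound for $\tilde{u}(\cdot,L)$. For the unperturbed transport equation ($\g\equiv 0$) this follows at once from the explicit expression of $u$ along characteristics. For the full system one writes the mild solution via Duhamel with the bounded integral perturbation $\G$ and iterates. Equivalently, this can be argued by duality as in the proof of the admissibility condition \eqref{admi} referenced in footnote (a), i.e.\ along the lines of Lemma \ref{lemma 2 app}: test the weak formulation \eqref{weak sol} against suitable test functions to extract the trace at $x=0$, and control it by the data.

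\textbf{Step 2 (Periodic $\Rightarrow$ Dirichlet).} Assume \eqref{syst init p} is exactly controllable at time $T$. Given $u^0,u^1\in L^2(0,L)$, apply exact controllability of the periodic system with $\uzt=u^0$ to obtain $\Ut\in L^2(0,T)$ such that the corresponding solution $\ut$ to \eqref{syst init p} satisfies $\ut(T)=u^1$. By Step 1 applied to the periodic system, $U(t):=\ut(t,L)$ belongs to $L^2(0,T)$. Then $u:=\ut$ satisfies the interior PDE of \eqref{syst init}, the boundary condition $u(t,L)=U(t)$, and the endpoint conditions $u(0)=u^0$, $u(T)=u^1$. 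By uniqueness in \eqref{continu}, this $u$ is the solution of \eqref{syst init} associated with $U$.

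\textbf{Step 3 (Dirichlet $\Rightarrow$ Periodic).} Conversely, assume \eqref{syst init} is exactly controllable at time $T$. Given $\uzt,u^1\in L^2(0,L)$, obtain $U\in L^2(0,T)$ steering the Dirichlet system from $u^0=\uzt$ to $u(T)=u^1$. By Step 1, the trace $u(\cdot,0)$ belongs to $L^2(0,T)$, so $\Ut(t):=U(t)-u(t,0)\in L^2(0,T)$. Then $\ut:=u$ satisfies the interior PDE of \eqref{syst init p}, the periodic-type boundary relation $\ut(t,L)-\ut(t,0)=\Ut(t)$, and the desired endpoint values.

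\textbf{Main obstacle.} The only nontrivial point is Step 1: the rigorous justification that the untested trace is an $L^2(0,T)$ function of time (hidden regularity), together with the corresponding continuous dependence. Everything else is essentially the observation $u=\ut$ together with the algebraic identification of controls. I would expect the hidden-regularity argument to mirror, up to a change of endpoint, the admissibility proof already carried out in \ref{app 2}.
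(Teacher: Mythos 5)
Your proposal is correct and takes essentially the same route as the paper: the paper's proof also rests on identifying the two solutions via $U(t)=\ut(t,0)+\Ut(t)$, with the key ingredient being exactly your Step 1 --- the hidden $L^2(0,T)$ regularity of the untested trace, established in Lemma~\ref{lemma 2 app} by the multiplier $(L-x)\ext{\ut}$. The only detail the paper adds that you gloss over is a regularization/density argument (Lemma~\ref{lemma 1 app} plus passage to the limit) to justify that the limiting function indeed satisfies the weak formulation of the other system.
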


Roughly speaking, to prove Proposition \ref{prop equiv}, it suffices to take $\uzt=u^0$ and $U(t)=\ut(t,0)+\Ut(t)$.
We postpone the rigorous proof to \ref{app 2}.

In addition, note that
$$\ker(\lambda-\opA^*) \cap \ker \opB^*=\ker(\lambda-\opAt^*) \cap \ker \opBt^*,$$
for every $\lambda \in \C$.
As a result, \eqref{ucp} is equivalent to
\begin{equation}\label{ucp bis}
\ker(\lambda-\opAt^*) \cap \ker \opBt^*=\ens{0}, \quad \forall \lambda \in \C.
\end{equation}


\subsection{Bases and problem of moments in Hilbert spaces}\label{sect bases}

Let us recall here some basic facts about bases and the problem of moments in Hilbert spaces.
We follow the excellent textbook \cite{You}.
Let $H$ be a complex Hilbert space.
We say that $\ens{f_k}_{k \in \Z}$ is a basis in $H$ if, for every $f \in H$ there exists a unique sequence of scalar $\ens{\alpha_k}_{k \in \Z}$ such that $f=\sum_{k \in \Z} \alpha_k f_k$.
We say that $\ens{f_k}_{k \in \Z}$ is a Riesz basis in $H$ if it is the image of an orthonormal basis of $H$ through an isomorphism.
We can prove that $\ens{f_k}_{k \in \Z}$ is a Riesz basis if, and only if, $\ens{f_k}_{k \in \Z}$ is complete in $H$ and there exist $m,M>0$ such that, for every $N \in \N$, for every scalars $\alpha_{-N},\ldots,\alpha_N$, we have
\begin{equation}\label{equ Riesz}
m \sum_{k=-N}^{N} \abs{\alpha_k}^2 \leq \norm{\sum_{k=-N}^{N} \alpha_k f_k}_{H}^2 \leq M \sum_{k=-N}^{N} \abs{\alpha_k}^2.
\end{equation}
See e.g. \cite[Chapter 1, Theorem 9]{You}.

A useful criterion to prove that a sequence is a Riesz basis is the theorem of Bari (see e.g. \cite[Chapter 1, Theorem 15]{You}).
It states that $\ens{f_k}_{k \in \Z}$ is a Riesz basis of $H$ if $\ens{f_k}_{k \in \Z}$ is $\omega$-independent, that is, for every sequence of scalars $\ens{c_k}_{k \in \Z}$,
\begin{equation}\label{omega indep}
\sum_{k \in \Z} c_k f_k=0 \Longrightarrow \left(c_k=0, \quad \forall k \in \Z\right),
\end{equation}
and if $\ens{f_k}_{k \in \Z}$ is quadratically close to some orthonormal basis $\ens{e_k}_{k \in \Z}$ of $H$, that is
$$\sum_{k \in \Z} \norm{f_k-e_k}_{H}^2<+\infty.$$

On the other hand, we say that $\ens{f_k}_{k \in \Z}$ is a Bessel sequence in $H$ if, for every $f \in H$, we have
$$\sum_{k \in \Z} \abs{\ps{f}{f_k}{H}}^2<+\infty.$$
We can prove that $\ens{f_k}_{k \in \Z}$ is a Bessel sequence in $H$ if, and only if, $\ens{f_k}_{k \in \Z}$ satisfies the second inequality in \eqref{equ Riesz}.
See e.g. \cite[Chapter 2, Theorem 3]{You}.

Finally, we say that $\ens{f_k}_{k \in \Z}$ is a Riesz-Fischer sequence in $H$ if, for every sequence of scalars $\ens{c_k}_{k \in \Z}$ that belongs to $\ell^2(\Z)$, there exists (at least) a solution $f \in H$ to the problem of moments
$$c_k=\ps{f}{f_k}{H}, \quad \forall k \in \Z.$$
We can prove that $\ens{f_k}_{k \in \Z}$ is a Riesz-Fischer sequence in $H$ if, and only if, $\ens{f_k}_{k \in \Z}$ satisfies the first inequality in \eqref{equ Riesz}.
See e.g. \cite[Chapter 2, Theorem 3]{You}.

Observe then that, a Riesz basis is nothing but a complete Bessel and Riesz-Fischer sequence.
We refer to \cite[Chapter 4]{You} for more details on the problem of moments.

To prove Theorem \ref{thm cont}, the idea is to write the controllability problem as a problem of moments.
To achieve this goal, and to prove that the resulting problem of moments indeed has a solution, we first need to establish some spectral properties of our operator $\opAt^*$.

\subsection{Spectral properties of $\opAt^*$}

From now on, we assume that $\g$ depends only on its first variable $x$:
\begin{equation}\label{hyp gdx bis}
\g(x,y)=\g(x), \quad g \in L^2(0,L).
\end{equation}

The first proposition gives the basic spectral properties of $\opAt^*$.

\begin{proposition}\label{prop spec gdx}
Assume that \eqref{hyp gdx bis} holds.
Then,
\begin{enumerate}
\item\label{point 1}
For every $\lambda \in \C$, we have
$$\ker(\lambda-\opAt^*)=\ens{a e^{-\lambda x}+bw_{\lambda}(x) \quad \middle| \quad (a,b) \in \C^2, \quad H(\lambda) \begin{pmatrix} a \\ b \end{pmatrix}=0},$$
where we have introduced the matrix
$$
H(\lambda) =
\begin{pmatrix}
1-e^{-\lambda L} & -w_{\lambda}(L) \\
\ds \int_0^L \ext{\g(x)} e^{-\lambda x} \, dx & \ds \int_0^L \ext{\g(x)} w_{\lambda}(x) \, dx-1
\end{pmatrix},
$$
and the function
$$
w_{\lambda}(x) = \int_0^x e^{-\lambda(x-\sigma)} \, d\sigma
=\left\{\begin{array}{cl}
\ds \frac{1-e ^{-\lambda x}}{\lambda} & \mbox{ if } \lambda \neq 0, \\
\ds x & \mbox{ if } \lambda=0.
\end{array}\right.
$$
\item\label{point 2}
We have
$$
\spec{\opAt^*}=\ens{\lambda_k = \frac{2ik\pi}{L} \, \middle| \, k \in \Z, k \neq 0} \cup \ens{\lambda_0 = \int_0^L \ext{\g(x)} \, dx}.
$$
\end{enumerate}
\end{proposition}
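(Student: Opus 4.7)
The key observation under hypothesis \eqref{hyp gdx bis} is that the integral term appearing in the definition of $\opAt^*$ reduces to $\int_0^L \ext{g(y)} z(y) \, dy$, which is independent of $x$. Hence the eigenvalue equation $\opAt^* z = \lambda z$ is equivalent to the first-order linear ODE $z'(x) + \lambda z(x) = b$ with the periodic boundary condition $z(0) = z(L)$, where I set $b = \int_0^L \ext{g(y)} z(y) \, dy \in \C$.

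For part \ref{point 1}, I would solve this ODE by variation of parameters to obtain the general solution $z(x) = a e^{-\lambda x} + b w_\lambda(x)$ for some $a \in \C$ (one checks that $w_\lambda$ indeed satisfies $w_\lambda' + \lambda w_\lambda = 1$ with $w_\lambda(0) = 0$, treating the case $\lambda = 0$ as a limit). The periodicity $z(0) = z(L)$ translates immediately into the first row of the linear system $H(\lambda) \binom{a}{b} = 0$, while the self-consistency requirement $b = \int_0^L \ext{g(y)} z(y) \, dy$, after substitution of the explicit form of $z$, yields exactly the second row. Conversely, any pair $(a,b) \in \ker H(\lambda)$ produces, via the identity $w_\lambda'+\lambda w_\lambda=1$, an element of $\ker(\lambda - \opAt^*)$, so the two sets coincide.

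For part \ref{point 2}, I would first reduce $\spec{\opAt^*}$ to the point spectrum. Since $\opAt$ generates a $C_0$-group, so does $\opAt^*$, and in particular its resolvent set is non-empty; because $\dom{\opAt^*}$ embeds compactly into $L^2(0,L)$ by the Rellich--Kondrachov theorem, $\opAt^*$ has compact resolvent. Hence $\spec{\opAt^*}$ consists entirely of eigenvalues, and by part \ref{point 1} these are precisely the zeros of $\det H(\lambda)$.

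It then only remains to compute $\det H(\lambda)$. For $\lambda \neq 0$, substituting $w_\lambda(L) = (1-e^{-\lambda L})/\lambda$ and exploiting the cancellation $(1-e^{-\lambda x}) + e^{-\lambda x} = 1$ under the integral, a short computation yields the compact formula
\[\det H(\lambda) = \frac{1-e^{-\lambda L}}{\lambda}\left(\int_0^L \ext{g(x)} \, dx - \lambda\right),\]
whose zeros are exactly $\lambda = 2ik\pi/L$ for $k \in \Z \setminus \ens{0}$ and $\lambda = \lambda_0$. The case $\lambda = 0$ must be handled separately: a direct evaluation gives $\det H(0) = L \int_0^L \ext{g(x)} \, dx = L \lambda_0$, so $0$ is an eigenvalue exactly when $\lambda_0 = 0$, which matches the claimed description. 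The only mild obstacle is the algebraic simplification of $\det H(\lambda)$; everything else is routine.
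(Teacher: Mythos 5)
Your proposal is correct and follows essentially the same route as the paper: solve the ODE by variation of parameters, encode the periodicity $z(0)=z(L)$ and the self-consistency of the constant $b=\int_0^L \ext{\g(y)}z(y)\,dy$ as the two rows of $H(\lambda)$, and identify $\spec{\opAt^*}$ with the zero set of $\det H(\lambda)$. The only differences are minor: you explicitly justify that the spectrum is purely point spectrum via a compact-resolvent argument (a step the paper leaves implicit when passing from $\dim\ker(\lambda-\opAt^*)=\dim\ker H(\lambda)$ to the spectral characterization), and you compute $\det H(\lambda)$ by direct expansion with a separate case $\lambda=0$, whereas the paper adds $\lambda$ times the second column to the first, using $1-e^{-\lambda x}-\lambda w_\lambda(x)=0$, which yields $\det H(\lambda)=w_\lambda(L)\bigl(\int_0^L \ext{\g(x)}\,dx-\lambda\bigr)$ uniformly in $\lambda$.
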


\begin{proof}
Let us prove \ref{point 1}.
Let $\lambda \in \C$.
Let $z \in \ker(\lambda-\opAt^*)$, that is,
\begin{equation}\label{ODE}
\left\{
\begin{array}{c}
z \in H^1(0,L), \quad z(L)=z(0),\\
\ds  \lambda z(x)+z'(x)-\int_0^L \ext{\g(\sigma)}z(\sigma) \, d\sigma=0, \quad x \in (0,L).
\end{array}
\right.
\end{equation}
Solving the ODE in \eqref{ODE} yields
\begin{equation}\label{sol ODE}
z(x)=e^{-\lambda x}z(0) +w_{\lambda}(x) I,
\end{equation}
with
$$
I = \int_0^L \ext{\g(\sigma)}z(\sigma) \, d\sigma.
$$
From the boundary condition $z(L)=z(0)$ we obtain the relation
$$
\left(1-e^{-\lambda L}\right)z(0)-w_{\lambda}(L) I=0.
$$
To obtain a second relation, we mutiply \eqref{sol ODE} by $\ext{\g}$ and integrate over $(0,L)$, so that
$$
\left(\int_0^L \ext{\g(x)} e^{-\lambda x} \, dx\right)z(0)+ \left(\int_0^L \ext{\g(x)} w_{\lambda}(x) \, dx-1\right)I=0.
$$

Conversely, let
$$z(x)=a e^{-\lambda x}+bw_{\lambda}(x),$$
where $(a,b) \in \C^2$ is such that
\begin{equation}\label{Hlambda}
H(\lambda) \begin{pmatrix} a \\ b \end{pmatrix}=0.
\end{equation}
Clearly, $z \in H^1(0,L)$.
From the first equation of \eqref{Hlambda} and $w_{\lambda}(0)=0$, we have $z(L)=z(0)$.
From the second equation of \eqref{Hlambda}, $z$ solves the ODE in \eqref{ODE}.

Let us now turn out to the proof of \ref{point 2}.
The map
$$
\begin{array}{rccc}
& \ker H(\lambda) &\longrightarrow &\ker(\lambda-\opAt^*) \\
&\begin{pmatrix} a \\ b \end{pmatrix} &\longmapsto & ae^{-\lambda x} +b w_{\lambda}(x),
\end{array}
$$
is an isomorphism (the injectivity can be seen using $w_{\lambda}(0)=0$).
As a result,
$$
\dim \ker(\lambda-\opAt^*)=\dim \ker H(\lambda), \quad \forall \lambda \in \C.
$$
In particular,
$$\lambda \in \spec{\opAt^*} \Longleftrightarrow \det H(\lambda)=0.$$
Let us now compute more precisely $\det H(\lambda)$.
Observe that
$$
1-e^{-\lambda x}-\lambda w_{\lambda}(x)=0, \quad \forall \lambda \in \C, \forall x \in [0,L].
$$
Thus, adding $\lambda$ times the second column of the matrix $H(\lambda)$ to its first column, we obtain
$$\det H(\lambda)=
\det
\begin{pmatrix}
0 & \ds -w_{\lambda}(L) \\
\ds \int_0^L \ext{\g(x)} \, dx -\lambda & \ds \int_0^L \ext{\g(x)} w_{\lambda}(x) \, dx-1
\end{pmatrix},
$$
so that
$$\det H(\lambda)=w_{\lambda}(L)\left(\int_0^L \ext{\g(x)} \, dx -\lambda\right).$$
Finally, from the very definition of $w_{\lambda}$, we can check that
$$
w_{\lambda}(L)=0 \Longleftrightarrow \lambda \in \ens{\frac{2ik\pi}{L} \, \middle| \, k \in \Z, k \neq 0}.
$$
\end{proof}

\begin{remark}\label{rem disj}
In view of the controllability, we shall always assume that
\begin{equation}\label{vp disj}
\lambda_0 \neq \lambda_k, \quad \forall k \neq 0.
\end{equation}
Indeed, if \eqref{vp disj} does not hold, then $\lambda_0$ is an eigenvalue of geometric multiplicity at least two and \eqref{syst init p} is then impossible to control since the control operator is one-dimensional.
This follows from the general inequality
$$\dim \ker(\lambda-\opAt^*) \leq \dim \range \opBt^*, \quad \forall \lambda \in \C,$$
which is a consequence of \eqref{ucp bis} (and we recall that \eqref{ucp bis} is a necessary condition to the controllability, see Remark \ref{rem ac}).
Note that \eqref{vp disj} holds in particular if $\g$ is a real-valued function.
\end{remark}

Under assumption \eqref{vp disj} it is not difficult to see that the eigenspaces of $\opAt^*$ can be rewritten as
$$
\ker(\lambda_k-\opAt^*)=\Span \ens{\phi_k},
$$
where
\begin{equation}\label{def phik}
\phi_0(x)=1, \quad
\phi_k(x)=e^{-\lambda_k x}+\frac{1}{\lambda_k-\lambda_0}\int_0^L \ext{\g(x)} e^{-\lambda_k x} \, dx.
\end{equation}

Let us now write the property \eqref{ucp bis} more explicitely for the case \eqref{hyp gdx bis} (the proof is straightforward thanks to \eqref{def phik}).
\begin{proposition}
Assume that \eqref{hyp gdx bis} and \eqref{vp disj} hold.
Then, \eqref{ucp bis} is equivalent to
\begin{equation}\label{ucp gdx}
1+\frac{1}{\lambda_k-\lambda_0}\int_0^L \ext{\g(x)} e^{-\lambda_k x} \, dx \neq 0, \quad \forall k \neq 0.
\end{equation}
\end{proposition}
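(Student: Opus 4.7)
The plan is to reduce the condition \eqref{ucp bis} to checking a single nonvanishing scalar for each eigenvalue, and then to evaluate that scalar explicitly using the formula for $\phi_k$ in \eqref{def phik}.

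First, I would observe that for any $\lambda \in \C$ that is not in $\spec{\opAt^*}$, the space $\ker(\lambda - \opAt^*)$ is trivial, so the condition in \eqref{ucp bis} holds automatically. Hence \eqref{ucp bis} is equivalent to the same condition restricted to $\lambda \in \spec{\opAt^*}$. By Proposition \ref{prop spec gdx} \ref{point 2} and hypothesis \eqref{vp disj}, the spectrum consists of the simple eigenvalues $\lambda_k$, and the corresponding eigenspaces are one-dimensional, spanned by the functions $\phi_k$ given in \eqref{def phik}. Since $\opBt^* \phi_k = \phi_k(L)$ by \eqref{opBstar}, the condition $\ker(\lambda_k - \opAt^*) \cap \ker \opBt^* = \{0\}$ is equivalent to
\[
\phi_k(L) \neq 0, \quad \forall k \in \Z.
\]

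Next, I would evaluate $\phi_k(L)$. For $k=0$ we have $\phi_0 \equiv 1$, so $\phi_0(L) = 1 \neq 0$ and the condition is automatic. For $k \neq 0$, since $\lambda_k = 2ik\pi/L$, one has $e^{-\lambda_k L} = e^{-2ik\pi} = 1$, hence from \eqref{def phik},
\[
\phi_k(L) = e^{-\lambda_k L} + \frac{1}{\lambda_k - \lambda_0}\int_0^L \overline{g(x)} e^{-\lambda_k x}\, dx = 1 + \frac{1}{\lambda_k - \lambda_0}\int_0^L \overline{g(x)} e^{-\lambda_k x}\, dx.
\]
Thus $\phi_k(L) \neq 0$ for every $k \neq 0$ is exactly the condition \eqref{ucp gdx}.

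Combining the two cases, \eqref{ucp bis} is equivalent to \eqref{ucp gdx}, which concludes the proof. There is essentially no obstacle here; the only point requiring care is the systematic bookkeeping of the two separate cases $k = 0$ and $k \neq 0$, together with the crucial (but elementary) remark that $e^{-\lambda_k L} = 1$ precisely for the periodic eigenvalues $k \neq 0$, which is what makes the expression for $\phi_k(L)$ collapse to the form appearing in \eqref{ucp gdx}.
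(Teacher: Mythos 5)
Your proof is correct and follows exactly the route the paper intends (the paper itself only remarks that the proof is ``straightforward thanks to \eqref{def phik}''): you reduce \eqref{ucp bis} to the nonvanishing of $\opBt^*\phi_k=\phi_k(L)$ on each one-dimensional eigenspace $\Span\ens{\phi_k}$ and evaluate $\phi_k(L)$ using $e^{-\lambda_k L}=1$ for $k\neq 0$ and $\phi_0\equiv 1$. Nothing further is needed.
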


\begin{remark}\label{rem exist G}
Actually, \eqref{ucp gdx} has to be checked only for a finite number of $k$.
Indeed, \eqref{ucp gdx} always holds for $k$ large enough since
\begin{equation}\label{rem ucp gdx}
\frac{1}{\lambda_k-\lambda_0}\int_0^L \ext{\g(x)} e^{-\lambda_k x} \, dx \xrightarrow[k \to \pm \infty]{} 0.
\end{equation}

On the other hand, there exist functions $\g$ such that \eqref{ucp gdx} fails for an arbitrary large number of $k$.
Indeed, observe that for real-valued function $\g$, the equality
$$1+\frac{1}{\lambda_k-\lambda_0}\int_0^L \ext{\g(x)} e^{-\lambda_k x} \, dx = 0,$$
is equivalent to (taking real and imaginary parts)
$$
\left\{\begin{array}{rl}
\ds \int_0^L \g(x)\cos\left(\frac{2k\pi}{L}x\right) \, dx & \ds =\int_0^L \g(x) \, dx, \\
\ds \int_0^L \g(x)\sin\left(\frac{2k\pi}{L}x\right) \, dx & \ds = \frac{2k\pi}{L}.
\end{array}\right.
$$
For instance, for any $a_0 \in \R$ and any $N \geq 1$, the function
$$\g(x)=a_0+\frac{2}{L}\sum_{k=1}^{N} a_0 \cos\left(\frac{2k\pi}{L}x\right)+\frac{2}{L}\sum_{k=1}^{N} \frac{2k\pi}{L} \sin\left(\frac{2k\pi}{L}x\right),$$
satisfies these equalities for $k=1, \ldots, N$.
\end{remark}

The next and last proposition provides all the additional spectral properties required to apply the method of moments.

\begin{proposition}\label{prop moment}
Assume that \eqref{hyp gdx bis} and \eqref{vp disj} hold.
Then,
\begin{enumerate}
\item
The eigenfunctions $\ens{\phi_k}_{k \in \Z}$ of $\opAt^*$ form a Riesz basis in $L^2(0,L)$.

\item
If \eqref{ucp gdx} holds, then $\inf_{k \in \Z} \abs{\opBt^*\phi_k} >0$.

\item
The set of exponentials $\{e^{-\ext{\lambda_k} t}\}_{k \in \Z}$ is a Riesz basis in $L^2(0,L)$.
\end{enumerate}
\end{proposition}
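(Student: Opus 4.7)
All three statements can be proved by realising the system in question as the image of the trigonometric orthonormal basis $e_k(x) \eqdef e^{-2ik\pi x/L}/\sqrt{L}$, $k\in\Z$, of $L^2(0,L)$ under a bounded operator of the form $\sqrt{L}\,\Id + \text{compact}$, and then inverting via the Fredholm alternative.

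For assertion (i), set $\alpha_k \eqdef (\lambda_k-\lambda_0)^{-1}\int_0^L \ext{g(\sigma)}\, e^{-\lambda_k \sigma}\,d\sigma$ for $k \neq 0$. Formula \eqref{def phik} then reads $\phi_0 = \sqrt{L}\,e_0$ and $\phi_k = \sqrt{L}(e_k + \alpha_k\,e_0)$ for $k\neq 0$. Defining $T$ by $T(e_k)=\phi_k$ gives $T=\sqrt{L}(\Id+S)$ with $S(e_0)=0$ and $S(e_k)=\alpha_k\,e_0$ for $k\neq 0$. The bound $\abs{\lambda_k-\lambda_0}^{-1}=O(1/\abs{k})$ (which uses \eqref{vp disj}), together with Bessel's inequality applied to $\ext{g}\in L^2(0,L)$, gives $\sum_{k\neq 0}\abs{\alpha_k}^2<+\infty$, so that $Sf=\ps{f}{v}{L^2}\, e_0$ for $v\eqdef\sum_{k\neq 0}\ext{\alpha_k}\, e_k \in L^2(0,L)$. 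Thus $S$ is bounded and of rank one, $\Id+S$ is Fredholm of index zero, and its injectivity is immediate: if $(\Id+S)f=0$ then $f$ is a scalar multiple of $e_0$, and then $Sf=0$ forces $f=0$. Hence $T$ is an isomorphism and $\ens{\phi_k}_{k\in\Z}$ is a Riesz basis.

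Assertion (ii) is a direct evaluation using \eqref{opBstar}: $\opBt^*\phi_0 = \phi_0(L)=1$, and for $k\neq 0$, using $e^{-\lambda_k L}=1$, $\opBt^*\phi_k = \phi_k(L)=1+\alpha_k$. Since $\alpha_k\to 0$ as $\abs{k}\to\infty$ (by Riemann--Lebesgue, or the square-summability above), $\abs{\opBt^*\phi_k}\to 1$, and the hypothesis \eqref{ucp gdx} asserts precisely $1+\alpha_k\neq 0$ for every $k\neq 0$; the infimum is therefore strictly positive.

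Assertion (iii) follows by the same principle. For $k\neq 0$, $e^{-\ext{\lambda_k}\,t}=e^{2ik\pi t/L}=\sqrt{L}\,e_k$, so the system $\ens{e^{-\ext{\lambda_k}\,t}}_{k\in\Z}$ differs from the orthogonal basis $\ens{e^{2ik\pi t/L}}_{k\in\Z}$ only in the $k=0$ slot, where $1$ is replaced by $e^{-\ext{\lambda_0}\,t}$. The operator $T'(e_k)\eqdef e^{-\ext{\lambda_k}\,t}$ has the form $T'=\sqrt{L}\,\Id+R$, with $Rf \eqdef (e^{-\ext{\lambda_0}\,t}-\sqrt{L}\,e_0)\ps{f}{e_0}{L^2}$ of rank one. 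As in (i), Fredholm reduces invertibility to injectivity, which a short computation identifies with the nonvanishing of $\int_0^L e^{-\ext{\lambda_0}\,t}\,dt$: this quantity equals $(1-e^{-\ext{\lambda_0}\,L})/\ext{\lambda_0}$ when $\lambda_0\neq 0$ and $L$ when $\lambda_0=0$, and is nonzero precisely under \eqref{vp disj}. The only mildly delicate point throughout is the summability $\sum_{k\neq 0}\abs{\alpha_k}^2<+\infty$ used in (i) to ensure compactness of the perturbation; everything else reduces to routine manipulations exploiting the rank-one structure.
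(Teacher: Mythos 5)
Your proof is correct, and your part (ii) coincides with the paper's argument verbatim: the limit $\opBt^*\phi_k \to 1$ coming from \eqref{rem ucp gdx}, combined with the nonvanishing hypothesis \eqref{ucp gdx}, gives the positive infimum. For parts (i) and (iii) you take a different, though closely related, route: the paper invokes Bari's theorem, checking that $\ens{\frac{1}{\sqrt L}\phi_k}_{k\in\Z}$ (resp.\ $\ens{\frac{1}{\sqrt L}e^{-\ext{\lambda_k}t}}_{k\in\Z}$) is quadratically close to a trigonometric orthonormal basis and $\omega$-independent (the latter by pairing the series against the exponentials), whereas you build the basis map explicitly as $\sqrt L(\Id+S)$ with $S$ bounded of rank one and conclude by the Fredholm alternative. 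The two arguments rest on the same estimate --- your summability $\sum_{k\neq 0}\abs{\alpha_k}^2<+\infty$ is exactly the paper's quadratic closeness --- but your version is self-contained (no appeal to Bari) and makes visible that the perturbation is not merely Hilbert--Schmidt but rank one, which turns the injectivity check into a one-line computation; the price is that in (iii) you must actually verify that injectivity reduces to $\int_0^L e^{-\ext{\lambda_0}t}\,dt\neq 0$, and that this integral, which equals $\ext{w_{\lambda_0}(L)}$, vanishes precisely when \eqref{vp disj} fails --- a point the paper's $\omega$-independence check absorbs implicitly and which you identify correctly. Two cosmetic remarks: with your convention $e_k=e^{-2ik\pi x/L}/\sqrt L$ one has $e^{-\ext{\lambda_k}t}=\sqrt L\,e_{-k}(t)$ rather than $\sqrt L\,e_k(t)$ (a harmless re-indexing), and the role of \eqref{vp disj} in (i) is only to guarantee that each $\alpha_k$ is finite, the decay $\abs{\lambda_k-\lambda_0}^{-1}=O(1/\abs{k})$ being automatic.
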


\begin{proof}
\begin{enumerate}
\item
We will use the theorem of Bari previously mentioned.
Clearly, $\{\frac{1}{\sqrt{L}}\phi_k\}_{k \in \Z}$ is quadratically close to the orthonormal basis $\{\frac{1}{\sqrt{L}} e^{\frac{-2ik\pi}{L}x}\}_{k \in \Z}$.
To prove that $\{\frac{1}{\sqrt{L}} \phi_k\}_{k \in \Z}$ is $\omega$-independent, it suffices to take the inner product of the series in \eqref{omega indep} with each $e^{\frac{-2ik\pi}{L}x}$.

\item
From \eqref{rem ucp gdx} we have $\opBt^* \phi_k \xrightarrow[k \to \pm \infty]{} 1$ and by assumption $\opBt^* \phi_k \neq 0$ for every $k \in \Z$.

\item
Again, it suffices to notice that $\{\frac{1}{\sqrt{L}} e^{-\ext{\lambda_k} t}\}_{k \in \Z}$ is $\omega$-independent and quadratically close to $\{\frac{1}{\sqrt{L}} e^{\frac{2ik\pi}{L}t}\}_{k \in \Z}$.
\end{enumerate}
\end{proof}


\subsection{Proof of Theorem \ref{thm cont}}

Let us first recall the following fondamental relation between the solution to \eqref{syst init p} and its adjoint state:
\begin{equation}\label{sol transpo}
\ps{\ut(T)}{z}{L^2}-\ps{\uzt}{\opSt(T)^*z}{L^2}=\int_0^{T} \Ut(t)\ext{\opBt^*\opSt(T-t)^*z} \, dt, \quad \forall z \in L^2(0,L).\end{equation}

We have now everything we need to apply the method of moments and prove Theorem \ref{thm cont}.

\begin{proof}
We are going to write the null-controllability problem as a problem of moments.
From \eqref{sol transpo} we see that $\ut(L)=0$ if, and only if,
$$-\ps{\uzt}{\opSt(L)^*z}{L^2}=\int_0^{L} \Ut(t)\ext{\opBt^*\opSt(L-t)^*z} \, dt, \quad \forall z \in L^2(0,L).$$
Since $\ens{\phi_k}_{k \in \Z}$ is a basis, it is equivalent to
$$-\ps{\uzt}{\opSt(L)^*\phi_k}{L^2}=\int_0^{L} \Ut(t)\ext{\opBt^*\opSt(L-t)^*\phi_k} \, dt, \quad \forall k \in \Z.$$
Since $\phi_k$ are the eigenfunctions of $\opAt^*$, we have $\opSt(\tau)^*\phi_k=e^{\lambda_k \tau} \phi_k$ and, as a result,
$$-\ps{\uzt}{\phi_k}{L^2}=\int_0^{L} e^{-\ext{\lambda_k} t} \Ut(t)\ext{\opBt^*\phi_k} \, dt, \quad \forall k \in \Z.$$
Since $\opBt^* \phi_k$ is a nonzero scalar, this is equivalent to
\begin{equation}\label{mom 1}
c_k=\int_0^{L} e^{-\ext{\lambda_k} t}\Ut(t) \, dt, \quad \forall k \in \Z,
\end{equation}
where
\begin{equation}\label{mom 2}
c_k=-\frac{1}{\ext{\opBt^*\phi_k}}\ps{\uzt}{\phi_k}{L^2}.
\end{equation}
Now, \eqref{mom 1}-\eqref{mom 2} is a standard problem of moments, if the sequence $\ens{c_k}_{k \in \Z}$ belongs to $\ell^2(\Z)$.
Since $\delta=\inf_{k \in \Z} \abs{\opBt^*\phi_k} >0$ and $\ens{\phi_k}_{k \in \Z}$ is a Riesz basis (in particular, a Bessel sequence), $\ens{c_k}_{k \in \Z}$ indeed belongs to $\ell^2(\Z)$:
$$\sum_{k \in \Z} \abs{c_k}^2
\leq \frac{1}{\delta^2} \sum_{k \in \Z} \abs{\ps{\uzt}{\phi_k}{L^2}}^2
<+\infty.$$
Finally, since $\{e^{-\ext{\lambda_k} t}\}_{k \in \Z}$ is a Riesz basis (in particular, a Riesz-Fischer sequence), the problem of moments \eqref{mom 1}-\eqref{mom 2} has a solution $\Ut \in L^2(0,L)$ (see Section \ref{sect bases}).
\end{proof}

\begin{remark}
Since $\{e^{-\ext{\lambda_k} t}\}_{k \in \Z}$ is a Riesz basis, the solution $\Ut \in L^2(0,L)$ to the problem of moments \eqref{mom 1}-\eqref{mom 2} is actually unique.
This shows that, at least in the case \eqref{hyp gdx}, the control $U \in L^2(0,L)$ given by Proposition \ref{prop exist k} is unique (note the complete analogy with the case $\g=0$ for which the only null-control possible in the square $(0,L)\times(0,L)$ is $U=0$).
As a result, there is also only one solution to the kernel equation \eqref{equ hstar} with boundary conditions \eqref{H regul traduit} and \eqref{hstar final}.
\end{remark}

\appendix
\def\appendixname{Appendix }

\renewcommand\appendix{\par
  \setcounter{section}{0}%
  \setcounter{subsection}{0}%
  \setcounter{equation}{0}
  \gdef\thefigure{\@Alph\c@section.\arabic{figure}}%
  \gdef\thetable{\@Alph\c@section.\arabic{table}}%
 \gdef\thesection{\appendixname\@Alph\c@section}%
  \@addtoreset{equation}{section}%
  \gdef\theequation{\@Alph\c@section.\arabic{equation}}%
}

\setcounter{proposition}{0}
\section{Functions of $H^1(\Tau_-) \cap H^1(\Tau_+)$}\label{app 1}
\renewcommand\thesection{\Alph{section}}
This appendix gathers some properties of the functions of $H^1(\Tau_-) \cap H^1(\Tau_+)$.
We start with a characterization of the space $H^1(\Tau_+)$ (with an obvious analogous statement for $H^1(\Tau_-)$).
We recall that, by definition, $f \in H^1(\Tau_+)$ if $f \in L^2(\Tau_+)$ and $f_x, f_y \in L^2(\Tau_+)$, where $f_y \in L^2(\Tau_+)$ means that there exists $F \in L^2(\Tau_+)$ such that
$$\iint_{\Tau_+} f(x,y) \phi_y(x,y) \, dxdy=-\iint_{\Tau_+} F(x,y) \phi(x,y) \, dxdy, \quad \forall \phi \in \cinfc(\Tau_+).$$
Such a $F$ is unique and it is also denoted by $f_y$.

\begin{proposition}\label{prop caract hut}
Let $f \in L^2(\Tau_+)$.
The two following properties are equivalent:
\begin{enumerate}
\item
$f_y \in L^2(\Tau_+)$.

\item
For a.e. $x \in (0,L)$, the map
$$f(x):y \longmapsto f(x,y),$$
belongs to $H^1(x,L)$ and
$$\iint_{\Tau_+} \abs{f(x)'(y)}^2 \, dydx <+\infty.$$
\end{enumerate}
Moreover, $f(x)'(y)=f_y(x,y)$.
\end{proposition}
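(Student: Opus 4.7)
The plan is to establish the two implications separately, with the moreover clause falling out along the way from the identification of $f(x)'(y)$ with $f_y(x,y)$.

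The easy direction is (ii) $\Rightarrow$ (i). Given any test function $\phi \in \cinfc(\Tau_+)$, its support is a compact subset of $\Tau_+$, so for a.e.\ $x \in (0,L)$ the section $y \mapsto \phi(x,y)$ belongs to $\cinfc(x,L)$. Fix such an $x$ at which, by hypothesis, $f(x,\cdot) \in H^1(x,L)$. The one-dimensional definition of the weak derivative gives
$$\int_x^L f(x,y)\phi_y(x,y)\,dy = -\int_x^L f(x)'(y)\phi(x,y)\,dy.$$
Integrating over $x \in (0,L)$ and using Fubini on both sides (which is legitimate thanks to $f \in L^2(\Tau_+)$ and to the hypothesis $\iint_{\Tau_+}|f(x)'(y)|^2\,dydx < +\infty$) yields the integration-by-parts identity that defines $f_y = F$ with $F(x,y) := f(x)'(y) \in L^2(\Tau_+)$.

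The more delicate direction is (i) $\Rightarrow$ (ii), where the difficulty is that the length $L-x$ of the $y$-slice of $\Tau_+$ depends on $x$, so one cannot apply a rectangular Fubini argument directly. My plan is to exhaust $\Tau_+$ by interior rectangles. For each rational $q \in (0,L)$ consider $R_q := (0,q) \times (q,L) \ssubset \Tau_+$; the restriction $f|_{R_q}$ lies in $L^2(R_q)$ and its distributional derivative in the $y$ variable equals $f_y|_{R_q} \in L^2(R_q)$. On the rectangle $R_q$ the classical Fubini-type theorem for partial Sobolev spaces (proved by mollifying in the $y$ variable alone, using $\cinfc$ test functions of product form $\psi(x)\chi(y)$ to separate variables) yields a null set $N_q \subset (0,q)$ outside of which $f(x,\cdot)|_{(q,L)} \in H^1(q,L)$ with weak derivative $f_y(x,\cdot)|_{(q,L)}$. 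Setting $N := \bigcup_{q \in \mathbb{Q} \cap (0,L)} N_q$, which is still null, I then fix any $x \in (0,L) \setminus N$: for every rational $q > x$ the slice $f(x,\cdot)$ is in $H^1(q,L)$ with the expected derivative, and Fubini applied to $f, f_y \in L^2(\Tau_+)$ ensures that $f(x,\cdot)$ and $f_y(x,\cdot)$ both lie in $L^2(x,L)$. Exhausting $(x,L)$ by the intervals $(q,L)$ with $q \searrow x$ and gluing the local weak derivatives (which are consistent since they all agree with $f_y(x,\cdot)$) promotes $f(x,\cdot)$ to $H^1(x,L)$ with derivative $f_y(x,\cdot)$. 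Finally, $\iint_{\Tau_+}|f(x)'(y)|^2\,dydx = \norm{f_y}_{L^2(\Tau_+)}^2 < +\infty$.

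The main obstacle is the reduction to a rectangular setting in the (i) $\Rightarrow$ (ii) direction: naively extending $f$ by zero across the diagonal $\ens{y=x}$ would introduce a singular trace in the distributional $y$-derivative, so one has to stay strictly inside $\Tau_+$ and recover the full slice $(x,L)$ by a countable-union-of-null-sets argument. The identification $f(x)'(y) = f_y(x,y)$ is automatic from both constructions.
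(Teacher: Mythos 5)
Your argument is correct. Note, however, that the paper itself states Proposition~\ref{prop caract hut} without proof (it is presented in \ref{app 1} as a known characterization of partial Sobolev regularity), so there is no proof of record to compare against; what you have written is essentially the standard ``Fubini theorem for Sobolev functions'', correctly adapted to the triangular geometry. Your two key moves are both sound: in (ii)~$\Rightarrow$~(i) the sections of a test function $\phi\in\cinfc(\Tau_+)$ are indeed compactly supported in $(x,L)$ for \emph{every} $x$, so the slicewise integration by parts integrates up via Fubini; and in (i)~$\Rightarrow$~(ii) the exhaustion of $\Tau_+$ by the rectangles $R_q=(0,q)\times(q,L)$, $q\in\mathbb{Q}\cap(0,L)$, together with a countable union of null sets and the observation that any $\chi\in\cinfc(x,L)$ is supported in some $(q,L)$ with $q>x$ rational, correctly circumvents the variable slice length (and you rightly identify that extending by zero across the diagonal would create a spurious trace term). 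Two small points a polished writeup should make explicit: first, in (ii)~$\Rightarrow$~(i) you must know that $(x,y)\mapsto f(x)'(y)$ is jointly measurable before declaring it an element of $L^2(\Tau_+)$ and applying Fubini --- this follows, e.g., by realizing $f(x)'$ as an a.e.\ limit of difference quotients of $f$, which are jointly measurable; second, on each rectangle $R_q$ the product-test-function argument requires a countable family of $\chi$'s chosen once and for all (dense in $\cinfc(q,L)$ in a suitable sense) so that the exceptional set in $x$ does not depend on $\chi$ --- you gesture at this but it is the crux of the rectangular lemma. Neither point affects the validity of the plan.
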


With the help of Proposition \ref{prop caract hut} it is not difficult to establish the following.

\begin{proposition}\label{prop ipp}
Let $f \in H^1(\Tau_-) \cap H^1(\Tau_+)$ and let us denote by $f_- \in L^2(\partial \Tau_-)$ (\textit{resp.} $f_+ \in L^2(\partial \Tau_+)$) the trace on $\Tau_-$ (\textit{resp.} $\Tau_+$) of the restriction of $f$ to $\Tau_-$ (\textit{resp.} $\Tau_+$).
\begin{enumerate}

\item\label{hyp A2}
For every $\varphi \in H^1(0,L)$, for a.e. $x \in (0,L)$, we have
\begin{multline*}
\int_0^L f(x,y)\varphi'(y) \, dy=
-\int_0^L f_y(x,y)\varphi(y) \, dy 
+\Big(f_-(x,x)-f_+(x,x)\Big)\varphi(x)
-f_-(x,0)\varphi(0)
+f_+(x,L)\varphi(L).
\end{multline*}

\item\label{hyp A3}
For every $\varphi \in L^2(0,L)$, the map
$$\Phi: x \mapsto \int_0^L f(x,y) \varphi(y) \, dy,$$
is in $H^1(0,L)$ with derivative
$$\Phi'(x)=
\Big(f_-(x,x)-f_+(x,x)\Big)\varphi(x)+\int_0^L f_x(x,y) \varphi(y) \, dy,$$
and traces
$$\Phi(0)=\int_0^L f_+(0,y) \varphi(y) \, dy, \quad \Phi(L)=\int_0^L f_-(L,y) \varphi(y) \, dy.$$
\end{enumerate}
\end{proposition}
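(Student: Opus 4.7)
The plan is to reduce part (i) to the classical one-dimensional integration by parts on the two subintervals $(0,x)$ and $(x,L)$, and then to deduce part (ii) by differentiating $\Phi$ written as a sum of two variable-endpoint integrals.

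For part (i), fix $\varphi \in H^1(0,L)$ and apply Proposition \ref{prop caract hut} to $f$ restricted to each of $\Tau_-$ and $\Tau_+$. For a.e. $x \in (0,L)$, the map $y \mapsto f(x,y)$ then belongs to $H^1(0,x)$ on one side and to $H^1(x,L)$ on the other, with weak derivative agreeing a.e. with $f_y(x,\cdot)$. The standard one-dimensional $H^1$ integration by parts formula then gives
\begin{align*}
\int_0^x f(x,y)\varphi'(y)\,dy &= -\int_0^x f_y(x,y)\varphi(y)\,dy + f_-(x,x)\varphi(x)-f_-(x,0)\varphi(0),\\
\int_x^L f(x,y)\varphi'(y)\,dy &= -\int_x^L f_y(x,y)\varphi(y)\,dy + f_+(x,L)\varphi(L)-f_+(x,x)\varphi(x),
\end{align*}
where the boundary values at $y = x,\,0,\,L$ are precisely the traces $f_\pm$ from the hypothesis. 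Summing these two identities yields the formula claimed in part (i); the only subtle point is that the two values $f_-(x,x)$ and $f_+(x,x)$ need not coincide, which is exactly the jump term in the statement.

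For part (ii), I would approximate $f$ in $H^1(\Tau_-)\cap H^1(\Tau_+)$ by a sequence $f^n$ that is smooth on each closed triangle $\overline{\Tau_\pm}$ (not across the diagonal), and apply the Leibniz formula for variable-endpoint integrals to
\[\Phi^n(x)=\int_0^x f^n(x,y)\varphi(y)\,dy+\int_x^L f^n(x,y)\varphi(y)\,dy,\]
obtaining
\[(\Phi^n)'(x)=\bigl(f^n_-(x,x)-f^n_+(x,x)\bigr)\varphi(x)+\int_0^L f^n_x(x,y)\varphi(y)\,dy.\]
Passing to the limit, the second term converges in $L^2(0,L)$ by Cauchy--Schwarz, and the first converges thanks to continuity of the trace map $H^1(\Tau_\pm)\to L^2(\partial\Tau_\pm)$ evaluated on the diagonal side. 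Since $\Phi^n\to\Phi$ in $L^2(0,L)$, a standard closedness argument for weak derivatives identifies the limit as $\Phi'$ and proves $\Phi \in H^1(0,L)$ with the desired formula.

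For the boundary traces, I would just read them off the representation $\Phi(x)=\int_0^x f_-(x,y)\varphi(y)\,dy+\int_x^L f_+(x,y)\varphi(y)\,dy$: at $x=0$ the first integral vanishes, leaving $\int_0^L f_+(0,y)\varphi(y)\,dy$, and symmetrically at $x=L$; continuity of $\Phi$ on $[0,L]$ (via $H^1(0,L)\hookrightarrow C^0([0,L])$) legitimizes the pointwise evaluation. The main technical hurdle is the justification that the diagonal trace $x\mapsto f_\pm(x,x)$ is a bona fide element of $L^2(0,L)$ and that convergence $f^n_\pm \to f_\pm$ in $H^1(\Tau_\pm)$ implies convergence of these diagonal traces in $L^2(0,L)$; this is a standard trace-theorem argument on the Lipschitz triangles $\Tau_\pm$, for which the diagonal is a full side of the boundary.
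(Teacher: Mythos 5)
Your proof is correct and follows precisely the route the paper intends: the paper states this proposition without proof, remarking only that it follows from Proposition \ref{prop caract hut}, and your slice-wise integration by parts on $(0,x)$ and $(x,L)$ for (i) together with the smooth-approximation/Leibniz argument for (ii) is a sound way to fill in the details. The only points requiring care --- identifying the one-dimensional endpoint values of the slices $y\mapsto f(x,y)$ with the two-dimensional traces $f_\pm$ on the diagonal, and the $L^2$ convergence of the diagonal traces under approximation in $H^1(\Tau_\pm)$ --- are exactly the ones you flag, and both are standard on the Lipschitz triangles $\Tau_\pm$.
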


\renewcommand\thesection{Appendix \Alph{section}}
\section{Proof of proposition \ref{prop equiv sol}}\label{app proof prop}

This appendix is devoted to the proof of Proposition \ref{prop equiv sol}.

\begin{proof}
Let $u^0 \in L^2(0,L)$ be fixed.
Set $w^0=P^{-1}u^0 \in L^2(0,L)$ and let $w \in C^0([0,T];L^2(0,L))$ be the corresponding solution to \eqref{syst targ}.
Let us recall that this means that $w$ satisfies
\begin{multline}\label{weak sol w}
\int_0^{\tau} \int_0^L w(t,x)\ext{\Big(-\psi_t(t,x)+\psi_x(t,x)\Big)} \, dx dt 
+\int_0^L w(\tau,x) \ext{\psi(\tau,x)} \, dx
-\int_0^L w^0(x) \ext{\psi(0,x)} \, dx=0,
\end{multline}
for every $\psi \in C^1([0,\tau]\times[0,L])$ such that $\psi(\cdot,0)=0$, and every $\tau \in [0,T]$.
Note that, by density, it is equivalent to take test functions $\psi$ in $L^2(0,\tau;H^1(0,L)) \cap C^1([0,\tau];L^2(0,L))$.
Let $u$ be defined by
$$
u(t)=Pw(t).
$$
Since $w \in C^0([0,T];L^2(0,L))$, it is clear that
$$u \in C^0([0,T];L^2(0,L)).$$
Moreover, since $w(T)=0$, we also have
$$u(T)=0.$$
Let us now establish that $u$ is the solution to \eqref{syst fb}, that is it satisfies
\begin{multline}\label{equ u}
\int_0^{\tau} \int_0^L u(t,x)\ext{\left(-\phi_t(t,x)+\phi_x(t,x)-\int_0^L \ext{\g(y,x)}\phi(t,y) \, dy\right)} \, dx dt
\\
+\int_0^L u(\tau,x) \ext{\phi(\tau,x)} \, dx
-\int_0^L u^0(x) \ext{\phi(0,x)} \, dx
-\int_0^{\tau} \Gamma u(t)\ext{\phi(t,L)} \, dt=0,
\end{multline}
for every $\phi \in C^1([0,\tau]\times[0,L])$ such that $\phi(\cdot,0)=0$, and every $\tau \in [0,T]$.
Since $u=Pw$ and $u^0=Pw^0$ by definition, we have
\begin{multline*}
\int_0^{\tau} \ps{u(t)}{-\ddt \phi(t)-\opA^* \phi(t)}{L^2} \, dt
+\ps{u(\tau)}{\phi(\tau)}{L^2}
-\ps{u^0}{\phi(0)}{L^2}
\\
=
\int_0^{\tau} \ps{w(t)}{-\ddt P^*\phi(t)-P^*\opA^* \phi(t)}{L^2} \, dt
+\ps{w(\tau)}{P^*\phi(\tau)}{L^2}
-\ps{w^0}{P^*\phi(0)}{L^2}.
\end{multline*}
On the other hand, since $\phi \in L^2(0,\tau;\dom{\opA^*})$, we can use the hypothesis \eqref{rela fonda-explicit} so that
$$-P^*\opA^* \phi(t)=-A_0^*P^*\phi(t)+P^*\Gamma^*\opB^*\phi(t).$$
It follows that
\begin{multline*}
\int_0^{\tau} \ps{u(t)}{-\ddt \phi(t)-\opA^* \phi(t)}{L^2} \, dt
+\ps{u(\tau)}{\phi(\tau)}{L^2}
-\ps{u^0}{\phi(0)}{L^2}
\\
=
\int_0^{\tau} \ps{w(t)}{-\ddt P^*\phi(t)-\opA_0^*P^* \phi(t)}{L^2} \, dt
+\ps{w(\tau)}{P^*\phi(\tau)}{L^2}
-\ps{w^0}{P^*\phi(0)}{L^2}
+\int_0^{\tau} \ps{w(t)}{P^*\Gamma^*\opB^*\phi(t)}{L^2}\, dt.
\end{multline*}
Taking the test function $\psi=P^*\phi$ in \eqref{weak sol w} (note that $\psi \in L^2(0,\tau;H^1(0,L))$ and satisfies $\psi(\cdot,0)=0$ since $P^*\left(\dom{\opA^*}\right) \subset \dom{\opA^*}$ by assumption), we see that the second line in the above equality is in fact equal to zero.
Taking the adjoints in the remaining term, we obtain \eqref{equ u}.
\end{proof}

\section{Controllability of \eqref{syst init p} and controllability of \eqref{syst init}}\label{app 2}
\renewcommand\thesection{\Alph{section}}
This appendix is devoted to the proof of Proposition \ref{prop equiv}.
The proof will use the following two lemmas.

\begin{lemma}\label{lemma 1 app}
Assume that
\begin{equation}\label{regu}
\uzt \in \dom{\opAt}, \quad \Ut \in H^1(0,T), \quad \Ut(0)=0.
\end{equation}
Then, the solution $\ut$ to \eqref{syst init p} belongs to $H^1((0,T)\times(0,L))$ and satisfies \eqref{syst init p} almost everywhere.
\end{lemma}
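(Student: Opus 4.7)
The plan is to upgrade the regularity of $\ut$ by one order in time, exploiting the compatibility condition $\Ut(0)=0$. The heuristic is that, if one formally differentiates \eqref{syst init p} in $t$, then $v \eqdef \ut_t$ should itself satisfy \eqref{syst init p} with the same $g$, but with initial datum $v^0 = \opAt\uzt$ and boundary control $V = \Ut'$, both of which live one Sobolev level below the original data. Since $\uzt \in \dom{\opAt} \subset H^1(0,L)$ gives $v^0 \in L^2(0,L)$ and $\Ut \in H^1(0,T)$ gives $V \in L^2(0,T)$, the well-posedness recalled via \eqref{continu} provides a unique $v \in C^0([0,T]; L^2(0,L))$ with these data.

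The crucial step is then the identification $v = \ut_t$. I would establish it from the Duhamel representation
$$\ut(t) = \opSt(t)\uzt + \int_0^t \opSt(t-s)\opBt\Ut(s)\,ds.$$
Formally differentiating in $t$, using that $\opAt$ generates a group (so $\opAt$ and $\opSt(t)$ commute on $\dom{\opAt}$), and then integrating by parts in $s$ via $\tfrac{d}{ds}\opSt(t-s) = -\opAt\opSt(t-s)$, one obtains
$$\ut_t(t) = \opSt(t)\opAt\uzt + \opSt(t)\opBt\Ut(0) + \int_0^t \opSt(t-s)\opBt\Ut'(s)\,ds;$$
the compatibility $\Ut(0)=0$ cancels the middle term, leaving exactly the Duhamel formula for $v$. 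Because $\opBt$ is unbounded (valued in $\dom{\opAt^*}'$), this manipulation must be made rigorous by pairing with an arbitrary $z \in \dom{\opAt^*}$: every term then becomes a scalar integral, the $s$-integration by parts is classical, and the admissibility estimate \eqref{admi} controls all $\opBt$-factors; one finally removes the test function by density of $\dom{\opAt^*}$ in $L^2(0,L)$.

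With $\ut_t = v \in L^2((0,T)\times(0,L))$ in hand, the spatial regularity is automatic by reading the PDE as
$$\ut_x(t,x) = \ut_t(t,x) - \int_0^L g(x,y)\ut(t,y)\,dy,$$
whose right-hand side lies in $L^2((0,T)\times(0,L))$ since $g \in L^2((0,L)\times(0,L))$ and $\ut \in C^0([0,T]; L^2(0,L))$. Therefore $\ut \in H^1((0,T)\times(0,L))$, the PDE holds almost everywhere by construction, and the boundary relation $\ut(t,L)-\ut(t,0) = \Ut(t)$ as well as the initial condition $\ut(0,\cdot)=\uzt$ are recovered in the trace sense available on $H^1$. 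The main obstacle is the rigorous justification of the formal semigroup calculation identifying $v$ with $\ut_t$, which must respect the unboundedness of $\opBt$; the duality/admissibility route sketched above is what handles it, and everything after that is a direct rearrangement.
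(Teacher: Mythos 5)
Your proposal is correct and reaches the conclusion by the same two-stage strategy as the paper: first upgrade the time-regularity to $\ut\in C^1([0,T];L^2(0,L))$ using the compatibility conditions \eqref{regu}, then obtain $\ut_x\in L^2((0,T)\times(0,L))$ by reading the equation. The difference is in how the first stage is handled: the paper simply invokes the abstract regularity result \cite[Proposition~4.2.10]{TW}, which states exactly that for $\uzt\in\dom{\opAt}$, $\Ut\in H^1(0,T)$ with $\Ut(0)=0$ the mild solution is $C^1$ in time, whereas you reprove this from scratch by differentiating the Duhamel formula, identifying $\ut_t$ with the solution $v$ associated with the data $(\opAt\uzt,\Ut')$, and justifying the manipulation by duality against $z\in\dom{\opAt^*}$ together with admissibility. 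Your route is self-contained (it is essentially the proof of the cited proposition) at the cost of length; the paper's is a one-line citation. Two points where your write-up is thinner than the paper's: (i) ``reading the PDE'' to get $\ut_x\in L^2$ is really an argument through the weak formulation --- one tests with $\phi\in\cinfc((0,T)\times(0,L))$, integrates by parts in $t$ (now legitimate), and identifies the distributional derivative $\ut_x$ with an explicit $L^2$ function; and (ii) the recovery of $\ut(0,\cdot)=\uzt$ and $\ut(t,L)-\ut(t,0)=\Ut(t)$ is not purely ``the trace sense on $H^1$'': one must multiply the a.e.\ equation by a general test function, integrate by parts, and compare with the defining weak formulation to see that the $H^1$ traces coincide with the prescribed data, which is what the paper does with the product test functions $\phi_1(t)\phi_2(x)$. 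Both are routine, so these are presentational gaps rather than mathematical ones.
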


\begin{proof}
It follows from \eqref{regu} and the abstract result \cite[Proposition 4.2.10]{TW} that
\begin{equation*}
\ut \in C^1([0,T];L^2(0,L)).
\end{equation*}
On the other hand, by definition, we have
\begin{multline}\label{weaksol ut}
\int_0^{T} \int_0^L \ut(t,x)\ext{\left(-\phi_t(t,x)+\phi_x(t,x)-\int_0^L \ext{\g(y,x)}\phi(t,y) \, dy\right)} \, dx dt
-\int_0^L \uzt(x) \ext{\phi(0,x)} \, dx
-\int_0^{T} \Ut(t)\ext{\phi(t,L)} \, dt=0,
\end{multline}
for every $\phi \in C^1([0,T]\times[0,L])$ such that $\phi(t,L)=\phi(t,0)$ and $\phi(T,x)=0$.
In particular, for every $\phi \in \cinfc((0,T)\times(0,L))$, this gives
\begin{multline}\label{weak sol ut}
-\int_0^{T} \int_0^L \ut(t,x)\ext{\phi_t(t,x)} \, dx dt
+\int_0^{T} \int_0^L \ut(t,x)\ext{\phi_x(t,x)} \, dx dt 
-\int_0^{T} \int_0^L \ut(t,x)\ext{\left(\int_0^L \ext{\g(y,x)}\phi(t,y) \, dy\right)} \, dx dt
=0.
\end{multline}
On the other hand, since $\ut \in C^1([0,T];L^2(0,L))$, we have
$$\int_0^{T} \int_0^L \ut(t,x)\ext{\phi_t(t,x)} \, dx dt
=
-\int_0^{T} \int_0^L \ut_t(t,x)\ext{\phi(t,x)} \, dx dt.
$$
Coming back to \eqref{weak sol ut} we then obtain
$$
\int_0^{T} \int_0^L \ut(t,x)\ext{\phi_x(t,x)} \, dx dt =
\int_0^{T} \int_0^L \left(-\ut_t(t,x)+\int_0^L \ut(t,y) g(x,y) \, dy\right)\ext{\phi(t,x)} \, dx dt.
$$
Since the map
$$(t,x) \longmapsto -\ut_t(t,x)+\int_0^L \ut(t,y) g(x,y) \, dy,$$
belongs to $L^2((0,T)\times(0,L))$, this shows that $\ut_x \in L^2((0,T)\times(0,L))$ with
\begin{equation}\label{equ pp}
-\ut_x(t,x)=-\ut_t(t,x)+\int_0^L \ut(t,y) g(x,y) \, dy, \, \mbox{ for a.e. } t \in (0,T), \, x \in (0,L).
\end{equation}
Now, multiplying \eqref{equ pp} by $\ext{\phi} \in C^1([0,T]\times[0,L])$ such that $\phi(t,L)=\phi(t,0)$ and $\phi(T,x)=0$, integrating by parts and comparing with \eqref{weaksol ut}, we obtain
$$
\int_0^L \ut(0,x) \ext{\phi(0,x)} \, dx
+\int_0^{T} \ut(t,L)\ext{\phi(t,L)} \, dt
=\int_0^L \uzt(x) \ext{\phi(0,x)} \, dx
+\int_0^{T} \Ut(t)\ext{\phi(t,L)} \, dt.
$$
Taking $\phi(t,x)=\phi_1(t)\phi_2(x)$ with $\phi_1 \in C^{\infty}([0,T])$ such that $\phi_1(0)=1$ and $\phi_1(T)=0$, and $\phi_2 \in \cinfc(0,L)$, we obtain
$$\int_0^L \ut(0,x) \ext{\phi_2(x)} \, dx=\int_0^L \uzt(x) \ext{\phi_2(x)} \, dx.$$
Since $\cinfc(0,L)$ is dense in $L^2(0,L)$, this gives
$$\ut(0,x)=\uzt(x), \, \mbox{ for a.e. } x \in (0,L).$$
Similarly, we can prove that
$$\ut(t,L)=\Ut(t), \, \mbox{ for a.e. } t \in (0,T).$$
\end{proof}

\begin{lemma}\label{lemma 2 app}
Let $V=\dom{\opAt} \times \ens{\Ut \in H^1(0,T) \, \middle| \, \Ut(0)=0}$.
The map
\begin{equation}\label{map ext}
\begin{array}{ccl}
V & \longrightarrow & L^2(0,T) \\
\left(\uzt, \Ut\right)&\longmapsto& \ut(\cdot,0),
\end{array}
\end{equation}
where $\ut$ is the solution to \eqref{syst init p}, has a unique continuous extension to $L^2(0,L) \times L^2(0,T)$.
We shall keep the notation $\ut(\cdot,0)$ to denote this extension.
\end{lemma}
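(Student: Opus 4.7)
The plan is to establish the trace estimate
\begin{equation*}
\norm{\ut(\cdot,0)}_{L^2(0,T)}^2 \leq C \left(\norm{\uzt}_{L^2(0,L)}^2 + \norm{\Ut}_{L^2(0,T)}^2\right), \quad \forall (\uzt,\Ut) \in V,
\end{equation*}
and then extend the map \eqref{map ext} by density. First I would check that the map is well-defined on $V$: for $(\uzt,\Ut) \in V$, Lemma \ref{lemma 1 app} gives $\ut \in H^1((0,T)\times(0,L))$, so the trace $\ut(\cdot,0)$ lies in $L^2(0,T)$ (indeed in $H^{1/2}(0,T)$).

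The heart of the argument is a weighted multiplier computation. I would multiply the equation $\ut_t-\ut_x=\int_0^L \g(x,y)\ut(t,y)\,dy$ by $e^{-\beta x}\ext{\ut}$, where $\beta>0$ will be chosen large, take the real part and integrate over $(0,T)\times(0,L)$. Integration by parts in $x$ produces the boundary terms
\begin{equation*}
\tfrac{1}{2}\int_0^T \abs{\ut(t,0)}^2\,dt - \tfrac{e^{-\beta L}}{2}\int_0^T \abs{\ut(t,L)}^2\,dt,
\end{equation*}
together with an interior term $-\tfrac{\beta}{2}\iint e^{-\beta x}\abs{\ut}^2$ and initial/final energies controlled by $\norm{\uzt}_{L^2}^2$ and $\norm{\ut(T)}_{L^2}^2$. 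Cauchy-Schwarz handles the right-hand side coming from the integral operator, bounded by $C\norm{\ut}_{L^2((0,T)\times(0,L))}^2$.

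Then I would use the periodicity relation $\ut(t,L)=\ut(t,0)+\Ut(t)$ and the inequality $\abs{\ut(t,L)}^2 \leq 2\abs{\ut(t,0)}^2 + 2\abs{\Ut(t)}^2$. Choosing $\beta$ large enough that $e^{-\beta L} \leq 1/4$ lets me absorb the resulting $|\ut(t,0)|^2$ contribution into the left-hand side, yielding
\begin{equation*}
\int_0^T \abs{\ut(t,0)}^2\,dt \leq C\left(\norm{\uzt}_{L^2(0,L)}^2 + \norm{\Ut}_{L^2(0,T)}^2 + \norm{\ut}_{L^2((0,T)\times(0,L))}^2\right),
\end{equation*}
and the last term on the right is controlled via the well-posedness estimate \eqref{continu}. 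The density of $V$ in $L^2(0,L)\times L^2(0,T)$ is easy (e.g. $\cinfc(0,T)$ sits inside the second factor, and smooth periodic functions lie in $\dom{\opAt}$), so the continuous linear map extends uniquely to the whole space.

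The main subtlety is the choice of weight: the naive multiplier $\ext{\ut}$ produces boundary coefficients $+\tfrac{1}{2}$ at $x=0$ and $-\tfrac{1}{2}$ at $x=L$, and the bound $\abs{\ut(t,L)}^2 \leq 2\abs{\ut(t,0)}^2 + 2\abs{\Ut(t)}^2$ would make the $\int_0^T\abs{\ut(t,0)}^2\,dt$ terms cancel on the two sides, giving no information. The exponential weight $e^{-\beta x}$ breaks this symmetry by making the coefficient at $x=L$ strictly smaller, which is exactly what permits the final absorption.
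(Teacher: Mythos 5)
Your proof is correct and follows essentially the same route as the paper: a weighted multiplier (energy) estimate giving the hidden-regularity trace bound $\norm{\ut(\cdot,0)}_{L^2(0,T)}^2\leq C(\norm{\uzt}_{L^2}^2+\norm{\Ut}_{L^2}^2)$ on the regular class $V$ via Lemma \ref{lemma 1 app}, followed by density. The only difference is the choice of weight: the paper multiplies by $(L-x)\ext{\ut}$, so the boundary term at $x=L$ vanishes identically and no absorption step (and no use of the relation $\ut(t,L)=\ut(t,0)+\Ut(t)$) is needed, whereas your $e^{-\beta x}$ weight achieves the same end after the absorption you describe.
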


\begin{proof}
In virtue of Lemma \ref{lemma 1 app}, for $\left(\uzt,\Ut\right) \in V$, the map \eqref{map ext} is well-defined and \eqref{syst init p} is satisfied almost everywhere.
Multiplying \eqref{syst init p} by $(L-x)\ext{\ut}$, we obtain
\begin{multline*}
\int_0^T \frac{1}{2} \ddt \left(\int_0^L (L-x) \abs{\ut(t,x)}^2 \, dx\right) \, dt 
-\int_0^T \int_0^L (L-x) \frac{1}{2} \partial_x \left(\abs{\ut(t,x)}^2\right) \, dxdt \\
=\int_0^T \int_0^L \left(\int_0^L g(x,y) \ut(t,y) \, dy\right) (L-x)\ext{\ut(t,x)} \, dtdx.
\end{multline*}
Integrating by parts, this gives
\begin{multline*}
\frac{1}{2} \int_0^L (L-x) \abs{\ut(T,x)}^2 \, dx
-\frac{1}{2} \int_0^L (L-x) \abs{\uzt(x)}^2 \, dx \\
+\frac{1}{2} L \int_0^T \abs{\ut(t,0)}^2 \, dt 
-\frac{1}{2}\int_0^T \int_0^L \abs{\ut(t,x)}^2 \, dxdt 
=\int_0^T \int_0^L \left(\int_0^L g(x,y) \ut(t,y) \, dy\right) (L-x)\ext{\ut(t,x)} \, dtdx.
\end{multline*}
Using the inequality $ab\leq \frac{1}{2}a^2+\frac{1}{2}b^2$ (for $a,b \geq 0$) and the Cauchy-Schwarz's inequality, we can estimate the term on the right-hand side by $\norm{\ut}_{C^0([0,T];L^2(0,L))}^2$.
Using then \eqref{continu}, we obtain
$$
\int_0^T \abs{\ut(t,0)}^2 \, dt 
\leq C \left(\norm{\uzt}_{L^2(0,L)}^2+\norm{\Ut}_{L^2(0,T)}^2\right),
$$
for some $C>0$ (which does not depend on $\uzt$ nor $\Ut$).
As a result, the linear map \eqref{map ext} is continous on $L^2(0,L) \times L^2(0,T)$.
Since $V$ is dense in $L^2(0,L) \times L^2(0,T)$, we can extend this map in a unique continuous way to this space.
\end{proof}

We can now give the proof of Proposition \ref{prop equiv}:

\begin{proof}
Let $\uzt \in L^2(0,L)$ and $\Ut \in L^2(0,T)$.
Let $\ut \in C^0([0,T];L^2(0,L))$ be the corresponding solution to \eqref{syst init p}.
By density of $\dom{\opAt}$ in $L^2(0,L)$ and of $\cinfc(0,T)$ in $L^2(0,T)$, there exist sequences
$$\uzt_n \in \dom{\opAt}, \quad \Ut_n \in \cinfc(0,T),$$
such that
\begin{equation}\label{conv}
\uzt_n \xrightarrow[n \to +\infty]{} \uzt \, \mbox{ in } L^2(0,L), \qquad 
\Ut_n \xrightarrow[n \to +\infty]{} \, \Ut \mbox{ in } L^2(0,T).
\end{equation}
Let $\ut_n \in C^0([0,T];L^2(0,L))$ be the solution to
\begin{equation}\label{equ un}
\left\{
\begin{array}{rll}
\left(\ut_n\right)_t(t,x) -\left(\ut_n\right)_x(t,x) =&\ds \int_0^L \g(x,y)\ut_n(t,y) \, dy, & t \in (0,T) \, x \in (0,L), \\
\ut_n(t,L)-\ut_n(t,0)=& \Ut_n(t), & t \in (0,T), \\
\ut_n(0,x) =& \uzt_n(x), & x \in (0,L).
\end{array}
\right.
\end{equation}
By \eqref{continu} and \eqref{conv}, we have
$$\ut_n \xrightarrow[n \to +\infty]{} \ut \, \mbox{ in } C^0([0,T];L^2(0,L)).$$
On the other hand, by Lemma \ref{lemma 1 app}, we know that
$$\ut_n \in H^1((0,T)\times(0,L)),$$
and that \eqref{equ un} is satisfied almost everywhere.
Let $\tau \in [0,T]$ and $\phi \in C^1([0,\tau]\times[0,L])$ be such that $\phi(\cdot,0)=0$.
Multiplying \eqref{equ un} by $\ext{\phi}$ and integrating by parts yields
\begin{multline}\label{ipp un}
\int_0^{\tau} \int_0^L \ut_n(t,x)\ext{\left(-\phi_t(t,x)+\phi_x(t,x)-\int_0^L \ext{\g(y,x)}\phi(t,y) \, dy\right)} \, dx dt
\\
+\int_0^L \ut_n(\tau,x) \ext{\phi(\tau,x)} \, dx
-\int_0^L \uzt_n(x) \ext{\phi(0,x)} \, dx
-\int_0^{\tau} \left(\ut_n(t,0)+\Ut_n(t)\right)\ext{\phi(t,L)} \, dt
=0.
\end{multline}
By Lemma \ref{lemma 2 app} and \eqref{conv}, we know that
$$\ut_n(\cdot,0) \xrightarrow[n \to +\infty]{} \ut(\cdot,0) \, \mbox{ in } L^2(0,\tau).$$
Thus, passing to the limit $n \to +\infty$ in \eqref{ipp un}, we obtain
\begin{multline*}
\int_0^{\tau} \int_0^L \ut(t,x)\ext{\left(-\phi_t(t,x)+\phi_x(t,x)-\int_0^L \ext{\g(y,x)}\phi(t,y) \, dy\right)} \, dx dt
\\
+\int_0^L \ut(\tau,x) \ext{\phi(\tau,x)} \, dx
-\int_0^L \uzt(x) \ext{\phi(0,x)} \, dx
-\int_0^{\tau} \left(\ut(t,0)+\Ut(t)\right)\ext{\phi(t,L)} \, dt
=0.
\end{multline*}
This shows that $\ut$ is the (unique) solution of \eqref{syst init} with $u^0=\uzt$ and $U(t)=\ut(t,0)+\Ut(t)$.
\end{proof}

%




  \bibliographystyle{elsarticle-num} 
  \bibliography{biblio}

\begin{thebibliography}{10}
\expandafter\ifx\csname url\endcsname\relax
  \def\url#1{\texttt{#1}}\fi
\expandafter\ifx\csname urlprefix\endcsname\relax\def\urlprefix{URL }\fi
\expandafter\ifx\csname href\endcsname\relax
  \def\href#1#2{#2} \def\path#1{#1}\fi

\bibitem{KS}
M.~Krstic, A.~Smyshlyaev,
  \href{http://dx.doi.org/10.1016/j.sysconle.2008.02.005}{Backstepping boundary
  control for first-order hyperbolic {PDE}s and application to systems with
  actuator and sensor delays}, Systems Control Lett. 57~(9) (2008) 750--758.
\newblock \href {http://dx.doi.org/10.1016/j.sysconle.2008.02.005}
  {\path{doi:10.1016/j.sysconle.2008.02.005}}.
\newline\urlprefix\url{http://dx.doi.org/10.1016/j.sysconle.2008.02.005}

\bibitem{Nak}
S.-I. Nakagiri, \href{http://dx.doi.org/10.1093/imamci/dns026}{Deformation
  formulas and boundary control problems of first-order {V}olterra
  integro-differential equations with nonlocal boundary conditions}, IMA J.
  Math. Control Inform. 30~(3) (2013) 345--377.
\newblock \href {http://dx.doi.org/10.1093/imamci/dns026}
  {\path{doi:10.1093/imamci/dns026}}.
\newline\urlprefix\url{http://dx.doi.org/10.1093/imamci/dns026}

\bibitem{2003-Bokovic-Balogh-Krstic-MCSS}
D.~M. Bo{\v{s}}kovi{\'c}, A.~Balogh, M.~Krsti{\'c}, Backstepping in infinite
  dimension for a class of parabolic distributed parameter systems, Math.
  Control Signals Systems 16~(1) (2003) 44--75.

\bibitem{2003-Liu-SICON}
W.~Liu, \href{http://dx.doi.org/10.1137/S0363012902402414}{Boundary feedback
  stabilization of an unstable heat equation}, SIAM J. Control Optim. 42~(3)
  (2003) 1033--1043.
\newblock \href {http://dx.doi.org/10.1137/S0363012902402414}
  {\path{doi:10.1137/S0363012902402414}}.
\newline\urlprefix\url{http://dx.doi.org/10.1137/S0363012902402414}

\bibitem{2004-Smyshlyaev-Krstic-IEEE}
A.~Smyshlyaev, M.~Krstic,
  \href{http://dx.doi.org/10.1109/TAC.2004.838495}{Closed-form boundary state
  feedbacks for a class of 1-{D} partial integro-differential equations}, IEEE
  Trans. Automat. Control 49~(12) (2004) 2185--2202.
\newblock \href {http://dx.doi.org/10.1109/TAC.2004.838495}
  {\path{doi:10.1109/TAC.2004.838495}}.
\newline\urlprefix\url{http://dx.doi.org/10.1109/TAC.2004.838495}

\bibitem{KSbook}
M.~Krstic, A.~Smyshlyaev,
  \href{http://dx.doi.org/10.1137/1.9780898718607}{Boundary control of {PDE}s},
  Vol.~16 of Advances in Design and Control, Society for Industrial and Applied
  Mathematics (SIAM), Philadelphia, PA, 2008, a course on backstepping designs.
\newblock \href {http://dx.doi.org/10.1137/1.9780898718607}
  {\path{doi:10.1137/1.9780898718607}}.
\newline\urlprefix\url{http://dx.doi.org/10.1137/1.9780898718607}

\bibitem{Hoc}
H.~Hochstadt, Integral equations, John Wiley \& Sons, New York-London-Sydney,
  1973, pure and Applied Mathematics.

\bibitem{BAK}
F.~Bribiesca-Argomedo, M.~Krstic, {Backstepping-Forwarding Control and
  Observation for Hyperbolic PDEs with Fredholm Integrals}, IEEE Trans.
  Automat. Control.

\bibitem{2014-Coron-Lu-JMPA}
J.-M. Coron, Q.~L{\"u},
  \href{http://dx.doi.org/10.1016/j.matpur.2014.03.004}{Local rapid
  stabilization for a {K}orteweg-de {V}ries equation with a {N}eumann boundary
  control on the right}, J. Math. Pures Appl. (9) 102~(6) (2014) 1080--1120.
\newblock \href {http://dx.doi.org/10.1016/j.matpur.2014.03.004}
  {\path{doi:10.1016/j.matpur.2014.03.004}}.
\newline\urlprefix\url{http://dx.doi.org/10.1016/j.matpur.2014.03.004}

\bibitem{2015-Coron-Lu-JDE}
J.-M. Coron, Q.~L{\"u},
  \href{http://dx.doi.org/10.1016/j.jde.2015.05.001}{Fredholm transform and
  local rapid stabilization for a {K}uramoto–{S}ivashinsky equation}, J.
  Differential Equations 259 (2015) 3683--3729.
\newblock \href {http://dx.doi.org/10.1016/j.jde.2015.05.001}
  {\path{doi:10.1016/j.jde.2015.05.001}}.
\newline\urlprefix\url{http://dx.doi.org/10.1016/j.jde.2015.05.001}

\bibitem{Paz}
A.~Pazy, \href{http://dx.doi.org/10.1007/978-1-4612-5561-1}{Semigroups of
  linear operators and applications to partial differential equations}, Vol.~44
  of Applied Mathematical Sciences, Springer-Verlag, New York, 1983.
\newblock \href {http://dx.doi.org/10.1007/978-1-4612-5561-1}
  {\path{doi:10.1007/978-1-4612-5561-1}}.
\newline\urlprefix\url{http://dx.doi.org/10.1007/978-1-4612-5561-1}

\bibitem{Cor}
J.-M. Coron, Control and nonlinearity, Vol. 136 of Mathematical Surveys and
  Monographs, American Mathematical Society, Providence, RI, 2007.

\bibitem{Zab}
J.~Zabczyk, \href{http://dx.doi.org/10.1007/978-0-8176-4733-9}{Mathematical
  control theory}, Modern Birkh\"auser Classics, Birkh\"auser Boston, Inc.,
  Boston, MA, 2008, an introduction, Reprint of the 1995 edition.
\newblock \href {http://dx.doi.org/10.1007/978-0-8176-4733-9}
  {\path{doi:10.1007/978-0-8176-4733-9}}.
\newline\urlprefix\url{http://dx.doi.org/10.1007/978-0-8176-4733-9}

\bibitem{JZ1999}
B.~Jacob, H.~Zwart, Equivalent conditions for stabilizability of
  infinite-dimensional systems with admissible control operators, SIAM J.
  Control Optim. 37~(5) (1999) 1419--1455 (electronic).
\newblock \href {http://dx.doi.org/10.1137/S036301299833344X}
  {\path{doi:10.1137/S036301299833344X}}.

\bibitem{BT}
M.~Badra, T.~Takahashi, \href{http://dx.doi.org/10.1051/cocv/2014002}{On the
  {F}attorini criterion for approximate controllability and stabilizability of
  parabolic systems}, ESAIM Control Optim. Calc. Var. 20~(3) (2014) 924--956.
\newblock \href {http://dx.doi.org/10.1051/cocv/2014002}
  {\path{doi:10.1051/cocv/2014002}}.
\newline\urlprefix\url{http://dx.doi.org/10.1051/cocv/2014002}

\bibitem{Hau}
M.~L.~J. Hautus, Controllability and observability conditions of linear
  autonomous systems, Nederl. Akad. Wetensch. Proc. Ser. A 72 = Indag. Math. 31
  (1969) 443--448.

\bibitem{Fat}
H.~O. Fattorini, Some remarks on complete controllability, SIAM J. Control 4
  (1966) 686--694.

\bibitem{JZ2001}
B.~Jacob, H.~Zwart, Exact controllability of {$C_0$}-groups with
  one-dimensional input operators, in: Advances in mathematical systems theory,
  Systems Control Found. Appl., Birkh\"auser Boston, Boston, MA, 2001, pp.
  221--242.

\bibitem{GX}
B.-Z. Guo, G.-Q. Xu,
  \href{http://dx.doi.org/10.1016/j.sysconle.2003.12.001}{Riesz bases and exact
  controllability of {$C_0$}-groups with one-dimensional input operators},
  Systems Control Lett. 52~(3-4) (2004) 221--232.
\newblock \href {http://dx.doi.org/10.1016/j.sysconle.2003.12.001}
  {\path{doi:10.1016/j.sysconle.2003.12.001}}.
\newline\urlprefix\url{http://dx.doi.org/10.1016/j.sysconle.2003.12.001}

\bibitem{LR}
T.-T. Li, B.-P. Rao, \href{http://dx.doi.org/10.1137/S0363012901390099}{Exact
  boundary controllability for quasi-linear hyperbolic systems}, SIAM J.
  Control Optim. 41~(6) (2003) 1748--1755 (electronic).
\newblock \href {http://dx.doi.org/10.1137/S0363012901390099}
  {\path{doi:10.1137/S0363012901390099}}.
\newline\urlprefix\url{http://dx.doi.org/10.1137/S0363012901390099}

\bibitem{Meh}
M.~Mehrenberger,
  \href{http://dx.doi.org/10.1023/B:AMHU.0000028832.47891.09}{Observability of
  coupled systems}, Acta Math. Hungar. 103~(4) (2004) 321--348.
\newblock \href {http://dx.doi.org/10.1023/B:AMHU.0000028832.47891.09}
  {\path{doi:10.1023/B:AMHU.0000028832.47891.09}}.
\newline\urlprefix\url{http://dx.doi.org/10.1023/B:AMHU.0000028832.47891.09}

\bibitem{You}
R.~M. Young, An introduction to nonharmonic {F}ourier series, Vol.~93 of Pure
  and Applied Mathematics, Academic Press, Inc. [Harcourt Brace Jovanovich,
  Publishers], New York-London, 1980.

\bibitem{TW}
M.~Tucsnak, G.~Weiss,
  \href{http://dx.doi.org/10.1007/978-3-7643-8994-9}{Observation and control
  for operator semigroups}, Birkh\"auser Advanced Texts: Basler Lehrb\"ucher.
  [Birkh\"auser Advanced Texts: Basel Textbooks], Birkh\"auser Verlag, Basel,
  2009.
\newblock \href {http://dx.doi.org/10.1007/978-3-7643-8994-9}
  {\path{doi:10.1007/978-3-7643-8994-9}}.
\newline\urlprefix\url{http://dx.doi.org/10.1007/978-3-7643-8994-9}

\end{thebibliography}






\end{document}